\newcommand{\N}{\mathbb{N}}
\newcommand{\Q}{\mathbb{Q}}
\newcommand{\R}{\mathbb{R}}
\newcommand{\Z}{\mathbb{Z}}
\newcommand{\calD}{\mathcal{D}}
\newcommand{\M}{\mathscr{M}}
\newcommand{\tensor}{\otimes} % binary tensor product
\newtheorem{theorem}{Theorem}[section]
\newtheorem{lemma}[theorem]{Lemma}
\newtheorem{corollary}[theorem]{Corollary}
\newtheorem{proposition}[theorem]{Proposition}
\theoremstyle{definition}
\newtheorem{definition}[theorem]{Definition}
\theoremstyle{remark}
\newtheorem{remark}[theorem]{Remark}
\newcommand\reallywidehat[1]{%
\savestack{\tmpbox}{\stretchto{%
  \scaleto{%
    \scalerel*[\widthof{\ensuremath{#1}}]{\kern.1pt\mathchar"0362\kern.1pt}%
    {\rule{0ex}{\textheight}}%WIDTH-LIMITED CIRCUMFLEX
  }{\textheight}% 
}{2.4ex}}%
\stackon[-8.5pt]{#1}{\tmpbox}%
}
\g@addto@macro\bfseries{\boldmath} % This makes math in section titles bold.
\let\int\relax
\DeclareMathOperator{\int}{int}
\DeclareMathOperator{\bd}{bd}
\newcommand{\Ti}{(T_{i})_{i}}
\renewcommand{\Xi}{(X_{i})_{i}}
\renewcommand{\xi}{X_{i}}
\newcommand{\bs}{\backslash}
\newcommand{\mb}{\mathbb}
\newcommand{\mbf}{\mathbf}
\newcommand{\msc}{\mathscr}
\newcommand{\1}{\mathbf{1}}
\DeclareMathOperator{\len}{len}
\newcommand{\E}{\mathscr{E}}
\newcommand{\x}{\mathbf{x}}
\newcommand{\y}{\mathbf{y}}
\begin{document}

\title{Tetrahedra Tiling Problem}
\subjclass[2020]{Primary  05B45; Secondary: 51M20}

\keywords{Dehn invariant, Tiling}

\author{A. Anas Chentouf}
\address{Department of Mathematics, Massachusetts Institute of Technology, Cambridge, MA 02139-4307, USA}
\email{chentouf@mit.edu}
% \urladdr{\url{https://web.mit.edu/~chentouf/www}}

\author{Yihang Sun}
\address{Department of Mathematics, Massachusetts Institute of Technology, Cambridge, MA 02139-4307, USA}
\email{kimisun@mit.edu}

%\bjorn{I removed myself as author.}

% \author{Bjorn Poonen}
% \address{Department of Mathematics, Massachusetts Institute of Technology, Cambridge, MA 02139-4307, USA}
% \email{poonen@math.mit.edu}
% \urladdr{\url{https://math.mit.edu/~poonen/}}

\thanks{This research was supported in part by National Science Foundation grant DMS-1601946.}

\date{December 4, 2023}

\begin{abstract}
Kedlaya, Kolpakov, Poonen, and Rubinstein classified tetrahedra all of whose dihedral angles are rational multiples of $\pi$ into two one-parameter families (a Hill family and a new family) and $59$ sporadic tetrahedra. In this paper, we consider which of them tile space; we show that every member of the Hill family, exactly one member of the new family, and at most $40$ sporadic tetrahedra tile space.
As a corollary, we disprove the converse of Debrunner's theorem, showing that not all Dehn invariant zero tetrahedra tile space. 
\end{abstract}

\maketitle
\section{Introduction}
\subsection{History}
The Dehn invariant of a polyhedra (see Section~\ref{sec:notations}) was introduced by Max Dehn in 1901 to prove Hilbert's Third Problem that not all polyhedra with equal volume could be dissected into each other \cite{dehn1901ueber}. It is invariant under \emph{scissors-cutting}, where a polyhedron $P$ is sliced into smaller polyhedra using planes and these slices are rearranged to form polyhedron $P'$. In this case, the two polyhedra $P$ and $P'$ are said to be \emph{scissors-congruent}. 
In 1901, Dehn proved that two scissors-congruent polyhedra have equal volume and Dehn invariant \cite{dehn1901ueber}. The converse was shown by Sydler in 1965 \cite{MR0192407}. 

The $\Q$-span $V\subset \R/\Q\pi$ of the six dihedral angles of a Dehn invariant zero tetrahedron has dimension $d\in\{0, 1, \dots, 6\}$. When $d=6$, by linear independence, the vanishing of the Dehn invariant implies that all the edge lengths $e_{ij}$ are $0$; there are no such non-degenerate tetrahedra. In a separate upcoming work, we study the cases $d=2,5$, and obtain partial classifications.
When $d=0$, all six dihedral angles lie in $\mb{Q}\pi$. A classification of these \emph{rational tetrahedra} was conjectured by Poonen and Rubinstein in $1995$ and proven in $2020$ by Kedlaya, Kolpakov, Poonen and Rubinstein \cite{kedlaya2020space}. It turns out that there are two one-parameter families (one of which is discovered by \cite{kedlaya2020space}) and $59$ sporadic tetrahedra. 
%\kimi{should we include in appendix $S$?}

The goal of this paper, not fully attained, is to determine which rational tetrahedra tile space.  A polyhedron \emph{tiles} if one can place congruent copies of it in $\mathbb{R}^3$ so that every point lies in the interior of exactly one copy or at the intersection of boundaries of two or more copies. 
The tiling problem, dating back to Aristotle \cite{doi:10.1080/0025570X.1981.11976933}, 
is closely related to the Dehn invariant: Debrunner showed that tiling polyhedra have Dehn invariant zero \cite{MR604258}.
This result does not show that any rational tetrahedra fails to tile space since all of them certainly have Dehn invariant zero.
We disprove the converse and provide further results on tiling.
\subsection{Notations}\label{sec:notations}
We use the tetrahedra notations in \cite{wirth2014relations,kedlaya2020space}. Throughout the paper, we rescale all tetrahedra to have unit volume and label the vertices as $\lbrace 1, 2, 3, 4\rbrace$.
For distinct $i, j\in \{1, 2, 3, 4\}$, let $e_{ij}$ be the length of the edge $ij$, and let $\alpha_{ij}$ be the dihedral angle along that edge.
We always list dihedral angles and edge lengths  with subscripts in the order $(12, 34, 13, 24, 14, 23)$. The \emph{Dehn invariant} of a tetrahedron is given by
\[
     \calD := \sum_{1\le i<j\le 4} e_{ij} \tensor \alpha_{ij} \; \in \; \R \tensor_{\Q} (\R/\Q\pi).
\]
Let $B(r, p)$ denote the closed Euclidean ball of radius $r\ge 0$ at $p\in \mb{R}^3$. 
%Let $\M$ be the moduli space of unit-volume tetrahedra with labeled vertices, up to congruence preserving the labeling. We view elements of $\msc{M}$ as the shapes of tetrahedra we use as tiles. 

\subsection{Main results}\label{sec:main}
% In Section~\ref{sec:top}, we study the tiling tetrahedra topologically, proving the following theorem. Note that \bjorn{Delete the unnecessary words "Note that".} our proof easily generalizes to \bjorn{polytopes in $\mb{R}^d$ for any $d \ge 3$?  It's not a statement for a single polytope at a time, so your wording is funny.} any polytope in $\mb{R}^d$.
% \begin{theorem}\label{thm:main-closed}
% The locus of tiling tetrahedra is closed in $\M$.
% \end{theorem}
% Next, 
We partially classify whether the rational tetrahedra tile space.
We let $\msc{S}\subset \msc{M}$ be the set of $59$ sporadic examples in \cite[Table 3]{kedlaya2020space} and let $ \msc{F}_1, \msc{F}_2\subset \msc{M}$ be the two one-parameter families \cite[Theorem 1.8]{kedlaya2020space}, which are parametrized as follows:
\begin{equation}\label{E:F1}
  \msc{F}_1 := \left\{ \left(\frac{\pi}{2}, \frac{\pi}{2}, \pi-2x,\frac{\pi}{3}, x, x \right): x\in \left(\frac{\pi}{6}, \frac{\pi}{2}\right)\cap \Q\pi\right\},
 \end{equation}
\begin{equation}\label{E:F2}
  \msc{F}_2 := \left\{ T_{x}:=\left(\frac{5\pi}{6} - x , \frac{\pi}{6}+x, \frac{2\pi}{3} -x, \frac{2\pi}{3}-x, x, x\right): x\in \left(\frac{\pi}{6}, \frac{\pi}{3}\right]\cap \Q\pi\right\}.
 \end{equation}

\begin{theorem}\label{thm:main-family}
The tetrahedra in $\msc{F}_1\cup \msc{F}_2$ that tile space are precisely $\msc{F}_1$.
\end{theorem}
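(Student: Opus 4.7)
The plan is to split the theorem into two pieces: (I)~every $H\in \msc{F}_1$ tiles space; and (II)~no $T_x\in \msc{F}_2$ with $x\in(\pi/6,\pi/3)\cap\mb{Q}\pi$ tiles space. These imply the theorem once one observes that $T_{\pi/3}$ has dihedral angles $(\pi/2,\pi/2,\pi/3,\pi/3,\pi/3,\pi/3)$, matching the $x=\pi/3$ member of $\msc{F}_1$; since a unit-volume tetrahedron is determined up to congruence by its (ordered) dihedral angles, this overlapping tetrahedron is already accounted for in $\msc{F}_1$.

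For (I), $\msc{F}_1$ is the rational slice of Hill's 1896 family of space-filling tetrahedra. The classical construction glues three copies of $H$, rotated about the common edge of dihedral angle $\pi-2x$, into a triangular prism, which in turn tiles $\mb{R}^3$ by translation. Invoking Hill's construction (or the exposition in \cite{kedlaya2020space}) closes this half.

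For (II), I would argue by local obstruction. Assume $T_x$ tiles space for some $x\in(\pi/6,\pi/3)\cap\mb{Q}\pi$. Two necessary conditions are: (a)~the \emph{edge condition} that around every edge of the tiling, a non-negative integer combination of $\{5\pi/6-x,\, \pi/6+x,\, 2\pi/3-x,\, 2\pi/3-x,\, x,\, x\}$ sums to $2\pi$; and (b)~the \emph{vertex condition} that at every vertex, the incident vertex figures of $T_x$---spherical triangles with angles $\{5\pi/6-x,\,2\pi/3-x,\,x\}$ at vertices $1,2$ and $\{\pi/6+x,\,2\pi/3-x,\,x\}$ at vertices $3,4$---tile $S^2$. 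I would enumerate the finitely many admissible patterns at an edge and at a vertex, then argue that consistency between an edge and its endpoints in the tiling forces a linear relation of the form $ax+b\pi=0$ with $a\neq 0$, whose only solution in $[\pi/6,\pi/3]$ is $x=\pi/3$, contradicting $x\in(\pi/6,\pi/3)$.

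The main obstacle is the combinatorial complexity: the edge condition alone admits many non-negative integer solutions for generic rational $x$ (for instance at $x=\pi/4$, where the angles reduce to $\{7\pi/12, 5\pi/12, 5\pi/12, 5\pi/12, \pi/4, \pi/4\}$, several combinations sum to $2\pi$), so the obstruction must be global in a neighborhood of an edge rather than purely at a single edge. The delicate part is arranging the case analysis so that every admissible local tiling pattern rules out $x\neq\pi/3$ uniformly in the denominator of $x/\pi$, rather than one value of $x$ at a time.
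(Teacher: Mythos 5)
Your overall decomposition matches the paper's: part (I) is handled by citing Hill's prism construction, and the identification of $T_{\pi/3}$ with the $x=\pi/3$ member of $\msc{F}_1$ (which you verify by direct computation, where the paper cites Sommerville) correctly reduces the theorem to showing that no $T_x$ with $x\in(\pi/6,\pi/3)\cap\mb{Q}\pi$ tiles space. The problem is that you do not actually prove this last statement, which is the entire content of the theorem: you propose an edge condition and a vertex condition, and then concede that you do not know how to organize the case analysis so that it rules out all $x$ uniformly in the denominator of $x/\pi$. As written, part (II) is a research plan, not a proof, and the particular plan you sketch is shaky on its own terms: the ``vertex figures tile $S^2$'' condition is not a clean necessary condition, because in a non-face-to-face tiling a vertex of one copy may lie in the interior of an edge or face of another copy, so the link of such a point is tiled by a mixture of vertex figures, wedges, and hemispheres rather than by vertex figures alone.

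The paper closes exactly this gap with a different dichotomy: face-to-face versus non-face-to-face tilings. For face-to-face tilings it shows $T_x$ is parallelogram-like (exactly two pairs of congruent opposite edges), so any face-to-face tiling is generated by reflections in faces, which forces every dihedral angle to divide $2\pi$; a short divisibility computation shows this fails for every $x\in(\pi/6,\pi/3)$. For non-face-to-face tilings, an edge crossing the interior of a face forces a nonnegative integer combination of dihedral angles equal to $\pi$, and the uniformity issue you flag is resolved by splitting $\msc{F}_2$ into ``general'' $x$ --- for which the set of $\pi$- and $2\pi$-combinations is independent of $x$ and the unique $\pi$-combination involves only the opposite pair $e_{12},e_{34}$, a configuration ruled out geometrically --- and an explicit finite list of $23$ ``specific'' values treated one at a time, one of which ($T_{\pi/4}$) resists the combinatorial criteria and requires an additional linear-programming/Farkas obstruction. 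To salvage your approach you would at minimum need to (a) treat face-to-face tilings separately, since they need not produce any $\pi$-combination at all, and (b) supply the general/specific split or some substitute that makes the edge analysis uniform in $x$.
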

%\kimi{As a corollary, we disprove the conjecture that all tetrahedra of Dehn invariant zero tile space.}
\begin{theorem}\label{thm:main-sporadic}
The sporadic tetrahedra that tile space form a subset of the following set $\msc{A}$ of $40$ tetrahedra,
listed by their dihedral angles $\left(\alpha_{12}, \alpha_{34}, \alpha_{13}, \alpha_{24}, \alpha_{14}, \alpha_{23}\right)$ as multiples of $\pi$. 
{\tiny\begin{multline*}\msc{A}=
\Bigg\lbrace
\left( \frac{ 1 }{ 4 } , \frac{ 1 }{ 3 } , \frac{ 1 }{ 3 } , \frac{ 1 }{ 4 } , \frac{ 1 }{ 2 } , \frac{ 2 }{ 3 } \right), \left( \frac{ 5 }{ 24 } , \frac{ 3 }{ 8 } , \frac{ 1 }{ 3 } , \frac{ 1 }{ 4 } , \frac{ 13 }{ 24 } , \frac{ 5 }{ 8 } \right), \left( \frac{ 1 }{ 4 } , \frac{ 1 }{ 2 } , \frac{ 1 }{ 2 } , \frac{ 1 }{ 3 } , \frac{ 1 }{ 3 } , \frac{ 1 }{ 2 } \right), \left( \frac{ 7 }{ 24 } , \frac{ 11 }{ 24 } , \frac{ 13 }{ 24 } , \frac{ 7 }{ 24 } , \frac{ 1 }{ 3 } , \frac{ 1 }{ 2 } \right), \left( \frac{ 1 }{ 5 } , \frac{ 1 }{ 5 } , \frac{ 1 }{ 3 } , \frac{ 1 }{ 5 } , \frac{ 2 }{ 3 } , \frac{ 2 }{ 3 } \right),  
\\ \left( \frac{ 1 }{ 5 } , \frac{ 1 }{ 5 } , \frac{ 4 }{ 15 } , \frac{ 4 }{ 15 } , \frac{ 3 }{ 5 } , \frac{ 11 }{ 15 } \right), \left( \frac{ 1 }{ 5 } , \frac{ 1 }{ 5 } , \frac{ 1 }{ 3 } , \frac{ 1 }{ 3 } , \frac{ 3 }{ 5 } , \frac{ 3 }{ 5 } \right), \left( \frac{ 1 }{ 3 } , \frac{ 1 }{ 3 } , \frac{ 3 }{ 5 } , \frac{ 1 }{ 3 } , \frac{ 2 }{ 5 } , \frac{ 2 }{ 5 } \right), \left( \frac{ 4 }{ 15 } , \frac{ 8 }{ 15 } , \frac{ 1 }{ 3 } , \frac{ 1 }{ 3 } , \frac{ 7 }{ 15 } , \frac{ 7 }{ 15 } \right), \left( \frac{ 1 }{ 7 } , \frac{ 3 }{ 7 } , \frac{ 1 }{ 3 } , \frac{ 1 }{ 3 } , \frac{ 4 }{ 7 } , \frac{ 4 }{ 7 } \right),  
\\ \left( \frac{ 2 }{ 7 } , \frac{ 2 }{ 7 } , \frac{ 1 }{ 3 } , \frac{ 1 }{ 3 } , \frac{ 3 }{ 7 } , \frac{ 5 }{ 7 } \right), \left( \frac{ 1 }{ 5 } , \frac{ 2 }{ 5 } , \frac{ 1 }{ 2 } , \frac{ 1 }{ 3 } , \frac{ 1 }{ 3 } , \frac{ 2 }{ 3 } \right), \left( \frac{ 2 }{ 15 } , \frac{ 7 }{ 15 } , \frac{ 1 }{ 2 } , \frac{ 1 }{ 3 } , \frac{ 2 }{ 5 } , \frac{ 3 }{ 5 } \right), \left( \frac{ 2 }{ 15 } , \frac{ 7 }{ 15 } , \frac{ 31 }{ 60 } , \frac{ 19 }{ 60 } , \frac{ 5 }{ 12 } , \frac{ 7 }{ 12 } \right), \left( \frac{ 1 }{ 5 } , \frac{ 2 }{ 5 } , \frac{ 7 }{ 12 } , \frac{ 1 }{ 4 } , \frac{ 5 }{ 12 } , \frac{ 7 }{ 12 } \right), 
\\  \left( \frac{ 13 }{ 60 } , \frac{ 23 }{ 60 } , \frac{ 7 }{ 12 } , \frac{ 1 }{ 4 } , \frac{ 2 }{ 5 } , \frac{ 3 }{ 5 } \right), \left( \frac{ 1 }{ 5 } , \frac{ 3 }{ 5 } , \frac{ 1 }{ 3 } , \frac{ 1 }{ 3 } , \frac{ 1 }{ 2 } , \frac{ 1 }{ 2 } \right), \left( \frac{ 3 }{ 10 } , \frac{ 7 }{ 10 } , \frac{ 1 }{ 3 } , \frac{ 1 }{ 3 } , \frac{ 2 }{ 5 } , \frac{ 2 }{ 5 } \right), \left( \frac{ 1 }{ 5 } , \frac{ 1 }{ 5 } , \frac{ 2 }{ 5 } , \frac{ 1 }{ 3 } , \frac{ 1 }{ 2 } , \frac{ 2 }{ 3 } \right), \left( \frac{ 1 }{ 6 } , \frac{ 7 }{ 30 } , \frac{ 11 }{ 30 } , \frac{ 11 }{ 30 } , \frac{ 1 }{ 2 } , \frac{ 2 }{ 3 } \right), \\
\left( \frac{ 1 }{ 5 } , \frac{ 1 }{ 5 } , \frac{ 9 }{ 20 } , \frac{ 17 }{ 60 } , \frac{ 11 }{ 20 } , \frac{ 37 }{ 60 } \right), \left( \frac{ 1 }{ 5 } , \frac{ 1 }{ 3 } , \frac{ 1 }{ 2 } , \frac{ 1 }{ 3 } , \frac{ 2 }{ 5 } , \frac{ 3 }{ 5 } \right), \left( \frac{ 1 }{ 6 } , \frac{ 11 }{ 30 } , \frac{ 1 }{ 2 } , \frac{ 1 }{ 3 } , \frac{ 13 }{ 30 } , \frac{ 17 }{ 30 } \right), \left( \frac{ 1 }{ 6 } , \frac{ 11 }{ 30 } , \frac{ 29 }{ 60 } , \frac{ 7 }{ 20 } , \frac{ 5 }{ 12 } , \frac{ 7 }{ 12 } \right), \left( \frac{ 1 }{ 5 } , \frac{ 1 }{ 3 } , \frac{ 1 }{ 3 } , \frac{ 1 }{ 5 } , \frac{ 1 }{ 2 } , \frac{ 4 }{ 5 } \right),  
\\\left( \frac{ 7 }{ 60 } , \frac{ 5 }{ 12 } , \frac{ 1 }{ 3 } , \frac{ 1 }{ 5 } , \frac{ 7 }{ 12 } , \frac{ 43 }{ 60 } \right), \left( \frac{ 1 }{ 5 } , \frac{ 2 }{ 5 } , \frac{ 2 }{ 5 } , \frac{ 1 }{ 5 } , \frac{ 1 }{ 2 } , \frac{ 2 }{ 3 } \right), \left( \frac{ 1 }{ 5 } , \frac{ 2 }{ 5 } , \frac{ 1 }{ 3 } , \frac{ 1 }{ 3 } , \frac{ 1 }{ 2 } , \frac{ 3 }{ 5 } \right),  \left( \frac{ 7 }{ 30 } , \frac{ 13 }{ 30 } , \frac{ 3 }{ 10 } , \frac{ 3 }{ 10 } , \frac{ 1 }{ 2 } , \frac{ 3 }{ 5 } \right), \left( \frac{ 1 }{ 4 } , \frac{ 7 }{ 20 } , \frac{ 1 }{ 3 } , \frac{ 1 }{ 3 } , \frac{ 9 }{ 20 } , \frac{ 13 }{ 20 } \right), \\
\left( \frac{ 1 }{ 5 } , \frac{ 1 }{ 2 } , \frac{ 3 }{ 5 } , \frac{ 1 }{ 5 } , \frac{ 1 }{ 3 } , \frac{ 2 }{ 3 } \right), \left( \frac{ 1 }{ 5 } , \frac{ 1 }{ 2 } , \frac{ 17 }{ 30 } , \frac{ 7 }{ 30 } , \frac{ 3 }{ 10 } , \frac{ 7 }{ 10 } \right),  \left( \frac{ 3 }{ 20 } , \frac{ 11 }{ 20 } , \frac{ 17 }{ 30 } , \frac{ 7 }{ 30 } , \frac{ 7 }{ 20 } , \frac{ 13 }{ 20 } \right), \left( \frac{ 3 }{ 20 } , \frac{ 11 }{ 20 } , \frac{ 11 }{ 20 } , \frac{ 1 }{ 4 } , \frac{ 1 }{ 3 } , \frac{ 2 }{ 3 } \right), 
\\ \left( \frac{ 11 }{ 60 } , \frac{ 31 }{ 60 } , \frac{ 11 }{ 20 } , \frac{ 1 }{ 4 } , \frac{ 3 }{ 10 } , \frac{ 7 }{ 10 } \right), \left( \frac{ 1 }{ 5 } , \frac{ 1 }{ 2 } , \frac{ 1 }{ 2 } , \frac{ 1 }{ 3 } , \frac{ 2 }{ 5 } , \frac{ 1 }{ 2 } \right), \left( \frac{ 1 }{ 5 } , \frac{ 1 }{ 2 } , \frac{ 7 }{ 15 } , \frac{ 11 }{ 30 } , \frac{ 11 }{ 30 } , \frac{ 8 }{ 15 } \right), \left( \frac{ 4 }{ 15 } , \frac{ 13 }{ 30 } , \frac{ 17 }{ 30 } , \frac{ 4 }{ 15 } , \frac{ 2 }{ 5 } , \frac{ 1 }{ 2 } \right), 
\\ \left( \frac{ 4 }{ 15 } , \frac{ 13 }{ 30 } , \frac{ 3 }{ 5 } , \frac{ 3 }{ 10 } , \frac{ 11 }{ 30 } , \frac{ 7 }{ 15 } \right), \left( \frac{ 4 }{ 15 } , \frac{ 17 }{ 30 } , \frac{ 2 }{ 5 } , \frac{ 3 }{ 10 } , \frac{ 11 }{ 30 } , \frac{ 8 }{ 15 } \right), \left( \frac{ 3 }{ 10 } , \frac{ 2 }{ 5 } , \frac{ 3 }{ 5 } , \frac{ 3 }{ 10 } , \frac{ 1 }{ 3 } , \frac{ 1 }{ 2 } \right), \left( \frac{ 1 }{ 3 } , \frac{ 2 }{ 5 } , \frac{ 2 }{ 5 } , \frac{ 1 }{ 3 } , \frac{ 1 }{ 2 } , \frac{ 2 }{ 5 } \right)
\Bigg\rbrace.
\end{multline*}}
\end{theorem}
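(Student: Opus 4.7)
My plan is to prove Theorem~\ref{thm:main-sporadic} by exhibiting, for each of the $19$ sporadic tetrahedra in $\msc{S}\setminus\msc{A}$, a local obstruction that prevents any tiling of $\R^{3}$. The principal tool is a necessary condition at edges of the tiling: if a polyhedron $P$ tiles $\R^{3}$, then at a generic interior point of any edge of the tiling, the dihedral angles of the copies of $P$ meeting at that point must sum to $2\pi$. For a tetrahedron $T$ with dihedral angles $\alpha_{12},\alpha_{34},\alpha_{13},\alpha_{24},\alpha_{14},\alpha_{23}\in\Q\pi$, this reduces to the decidable Diophantine condition
\[
2 \;\in\; \sum_{1\le i<j\le 4} \Z_{\ge 0}\,\frac{\alpha_{ij}}{\pi}.
\]
Failure of this condition alone forces $T$ not to tile.

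When the unrestricted condition is satisfied, I would sharpen the obstruction by incorporating the edge-length structure of $T$. Since the dihedral angles determine $T$ up to similarity, the edge lengths $e_{ij}$ can be computed explicitly (and then normalized by $\vol T = 1$). At a face-to-face tiling edge of length $\ell$, only those $\alpha_{ij}$ with $e_{ij}=\ell$ can contribute to the $2\pi$ sum, yielding one Diophantine condition per edge-length class of $T$. In particular, $T$ admits a shortest edge length $\ell_{\min}$, and a shortest tiling-edge necessarily draws its dihedral angles from the set $\{\alpha_{ij} : e_{ij}=\ell_{\min}\}$ alone; this gives a clean and robust refinement even before one worries about longer tiling edges.

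With these obstructions in hand, I would go through the $19$ tetrahedra in $\msc{S}\setminus\msc{A}$ (enumerated from \cite[Table~3]{kedlaya2020space}) case by case. For each, I would exhibit a single edge-length class $\ell$ for which no non-negative integer combination of the relevant $\alpha_{ij}/\pi$ equals $2$. Many will already fail the unrestricted condition; the rest should succumb to the shortest-length refinement. Each such verification is purely a finite computation.

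The main obstacle will be justifying the length-restricted refinement in tilings that are not face-to-face, where a vertex of one copy of $T$ can lie in the interior of an edge of another; such a configuration could in principle allow dihedral angles from edges of different lengths to meet along a single tiling segment, weakening the refined condition. To handle this I would perform a careful local analysis near such a meeting point, combining the angular balance on the two sides of the stray vertex with the vertex solid-angle constraint (solid angles summing to $4\pi$ at each tiling vertex) to recover the shortest-length obstruction, or a version strong enough to rule out each of the remaining tetrahedra. Making this analysis uniform and complete across all 19 cases is where the real work will lie.
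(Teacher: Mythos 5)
Your basic obstruction---that $2\pi$ must be a nonnegative integer combination of the $\alpha_{ij}$---is a correct necessary condition for any tiling, and your length-restricted refinement is sound for face-to-face tilings, where Lemma~\ref{lem:ff-lem} forces faces (hence edges) of adjacent copies to coincide exactly. But the proposal has two genuine gaps, and they are precisely where the content of the theorem lies. First, you leave the non-face-to-face case open, and it is not a peripheral difficulty: most of the $19$ tetrahedra to be excluded do admit unrestricted $2\pi$-combinations, so everything rests on the refinement, which you cannot justify once an edge of one copy may cross the interior of a face of another. The paper closes this case by a different mechanism (Proposition~\ref{prop:crtn}): an edge meeting the interior of a face forces a dihedral combination summing to $\pi$ (the face contributes a flat angle), and a short planar case analysis shows one can always locate two \emph{incident} edges lying on faces, so a tetrahedron with no $\pi$-combination, or whose unique $\pi$-combination involves only a pair of opposite edges, cannot tile non-face-to-face. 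Your proposed substitute (solid-angle balance near a stray vertex) is not carried out, and there is no evidence it produces a condition strong enough for the cases at hand. Relatedly, on the face-to-face side the paper's Proposition~\ref{prop:crtf} uses a stronger structural fact than your per-length-class condition: for tetrahedra with all edge lengths distinct or of parallelogram-like type, a face-to-face tiling is forced to be reflection-generated, so \emph{each individual} dihedral angle must divide $2\pi$; your condition agrees with this only when all six lengths are distinct and is strictly weaker in the parallelogram-like cases.

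Second, even if both local criteria were fully justified, they do not suffice: two of the sporadic tetrahedra that must be excluded pass every local angle test, and the paper eliminates them with a global counting argument that has no counterpart in your plan. There, one enumerates all combinations $C_1,\dots,C_s$ of dihedral angles summing to $\pi$ or $2\pi$, measures the total length $\ell_i(r)$ of tiling segments realizing each $C_i$ inside $B(r,0)$, and uses the volume count $n(r)\sim\tfrac{4}{3}\pi r^3$ together with the boundary estimate $\ell_0(r)=O(r^2)$ (Lemmas~\ref{lem:nr-bd} and~\ref{lem:l0r-bd}, Proposition~\ref{prop:lp-key}) to show the normalized frequencies must satisfy $D\x=\1$ with $\x\succeq\mbf{0}$; Farkas' lemma then converts infeasibility of this system into a proof that no tiling exists (Corollary~\ref{cor:crtl}). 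Without some analogue of this global step, your case-by-case local analysis cannot reach the full statement of the theorem.
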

Recall that Debrunner's theorem states that tiling polyhedra must have Dehn invariant zero. We immediately obtain that negative of its converse.
\begin{corollary}
Not all Dehn invaraint zero tetrahedra tile space.
\end{corollary}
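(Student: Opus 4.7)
The plan is to obtain the corollary as an immediate consequence of Theorem~\ref{thm:main-family}, using only the elementary observation (already alluded to in the introduction) that every rational tetrahedron has vanishing Dehn invariant. Since the real work is done inside Theorem~\ref{thm:main-family}, my ``proof'' of the corollary would amount to combining two facts and naming an explicit infinite family of counterexamples.

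First I would verify that every tetrahedron in $\msc{F}_2$ has Dehn invariant zero. This is purely formal: if each dihedral angle $\alpha_{ij}=q_{ij}\pi$ lies in $\Q\pi$, then in $\R\otimes_\Q(\R/\Q\pi)$ one has
\[
e_{ij}\otimes\alpha_{ij}=e_{ij}\otimes q_{ij}\pi=q_{ij}\,e_{ij}\otimes\pi=q_{ij}\,e_{ij}\otimes 0=0,
\]
so $\calD=\sum_{i<j}e_{ij}\otimes\alpha_{ij}=0$. Thus every $T_x\in\msc{F}_2$ has Dehn invariant zero regardless of its edge lengths.

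Next I would produce the counterexamples. By Theorem~\ref{thm:main-family}, the tetrahedra in $\msc{F}_1\cup\msc{F}_2$ that tile space are exactly those in $\msc{F}_1$, so no $T_x\in\msc{F}_2\setminus\msc{F}_1$ tiles space. The set $\msc{F}_2\setminus\msc{F}_1$ is infinite, since $\msc{F}_2$ is parametrized by $(\pi/6,\pi/3]\cap\Q\pi$ and a comparison of the dihedral-angle tuples in \eqref{E:F1} and \eqref{E:F2} shows that a tetrahedron can lie in $\msc{F}_1\cap\msc{F}_2$ only when $x=\pi/3$. Consequently, infinitely many members of $\msc{F}_2$ are Dehn invariant zero tetrahedra that fail to tile space, which disproves the converse of Debrunner's theorem.

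There is no real obstacle at the level of the corollary itself; all the difficulty has been moved into Theorem~\ref{thm:main-family}, where one must actually rule out space-filling for every $T_x\in\msc{F}_2\setminus\msc{F}_1$. (Alternatively, one could invoke Theorem~\ref{thm:main-sporadic} to exhibit up to $59-40=19$ sporadic counterexamples, but the cleanest and most economical statement uses the infinite family $\msc{F}_2\setminus\msc{F}_1$.)
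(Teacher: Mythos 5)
Your proof is correct and matches the paper's (implicit) argument: the corollary is obtained exactly by noting that every rational tetrahedron has Dehn invariant zero (each $\alpha_{ij}\in\Q\pi$ vanishes in $\R/\Q\pi$) and then citing the main theorems to exhibit rational tetrahedra, e.g.\ the members of $\msc{F}_2\setminus\{T_{\pi/3}\}$, that fail to tile. No gaps; the identification of $\msc{F}_1\cap\msc{F}_2$ with the single tetrahedron $T_{\pi/3}$ agrees with the paper's remark that $T_{\pi/3}\in\msc{F}_1$.
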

While $\msc{F}_2$ is a new discovery by \cite{kedlaya2020space}, $\msc{F}_1$ is the first Hill family and all of its members tile space \cite{MR1576480,goldberg1974three}. Moreover, as the ``No. (1) tetrahedron'' in \cite{sommerville1922space}, $T_{\pi/3}\in\msc{F}_1$, so it tiles space. Thus, Theorems~\ref{thm:main-family} and \ref{thm:main-sporadic} follow the following lemma, shown in Section~\ref{sec:app}. To prove it, we develop in Section~\ref{sec:criteria} some necessary conditions for a tetrahedron to tile space.
\begin{lemma}\label{lem:negative}
None of the tetrahedra in $\msc{F}_2\bs\{T_{\pi/3}\}$ and $\msc{S}\setminus\msc{A}$ tile space.
\end{lemma}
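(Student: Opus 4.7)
The plan is to first develop, in Section~\ref{sec:criteria}, necessary conditions that a tetrahedron $T$ must satisfy in order to tile $\R^3$, and then in Section~\ref{sec:app} to verify that each tetrahedron in $\msc{F}_2\setminus\{T_{\pi/3}\}$ and in $\msc{S}\setminus\msc{A}$ violates at least one of them. Three such conditions come from the local geometry of a tiling. The \emph{edge condition}: around every interior edge of a tile, the dihedral angles of the tiles meeting there sum to $2\pi$, so for each dihedral angle $\alpha_i$ of $T$ there must exist non-negative integers $c_1,\ldots,c_6$ with $c_i\ge 1$ and $\sum_{j=1}^{6}c_j\alpha_j=2\pi$. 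The \emph{vertex condition}: the solid angles of tiles incident to any interior tiling vertex sum to $4\pi$, giving an analogous Diophantine equation among the four solid angles $\sigma_v=\sum_{e\ni v}\alpha_e-\pi$ of $T$. The \emph{edge-length condition}: the collinear edges of tiles meeting along an edge of $T$ of length $e_{ij}$ must themselves tile that segment, yielding non-negative integer expressions for $e_{ij}$ in the remaining five lengths.

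For $\msc{F}_2\setminus\{T_{\pi/3}\}$, write the parameter as $x=r\pi$ with $r\in\Q\cap(1/6,1/3)$; the six dihedral angles of $T_x$ and the four solid angles $(\pi/2-x,\pi/2-x,x-\pi/6,x-\pi/6)$ depend linearly on $x$. Applying the edge condition at the edge of angle $x$ and the vertex condition at a vertex of solid angle $x-\pi/6$ produces linear Diophantine equations of the shape $A+Br=2$ and $A'+B'r=4$, where $A,B,A',B'$ are integer combinations of non-negative counts bounded a priori by $2\pi/x$ and $4\pi/(x-\pi/6)$ respectively. The plan is to show that within the very narrow interval $r\in(1/6,1/3)$, and subject also to the edge-length condition, the only admissible rational value of $r$ is $1/3$, corresponding to $T_{\pi/3}\in\msc{F}_1$, which is excluded from $\msc{F}_2\setminus\{T_{\pi/3}\}$.

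For each of the $19$ sporadic tetrahedra in $\msc{S}\setminus\msc{A}$, I would perform a finite check: since every $\alpha_i$ and every $\sigma_v$ is a rational multiple of $\pi$, the coefficients appearing in the edge, vertex, and edge-length conditions are bounded explicitly in terms of the minimal angle, solid angle, or edge length, so each case reduces to a finite integer program. A computer-aided enumeration should then exhibit, for each such tetrahedron, some edge, vertex, or edge length whose completion equation has no non-negative integer solution.

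The main obstacle will be the uniform treatment of the family $\msc{F}_2\setminus\{T_{\pi/3}\}$. For the sporadic tetrahedra, each case reduces to a bounded integer-programming problem and is essentially routine, whereas for the family one needs an argument that works simultaneously for infinitely many rational $x\in(\pi/6,\pi/3)$. I expect the crux to be pinpointing a single edge or vertex of $T_x$ whose completion equation, viewed as an affine constraint in $r$, forces $r=1/3$ once non-negativity and the a priori size bounds on the integer coefficients are imposed; the openness of the interval at the right endpoint is precisely what then rules out every remaining $T_x$.
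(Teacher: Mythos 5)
Your overall architecture---derive local necessary conditions, then check them computationally for the sporadic tetrahedra and via a parametrized Diophantine analysis for the family---resembles the paper's, but the conditions you propose are not valid necessary conditions for tiling, and even after correction they are too weak to prove the lemma. Your edge condition asserts that the dihedral angles of the tiles around an edge sum to $2\pi$. This ignores the possibility that an edge of one copy lies in the interior of a face of another copy, which contributes a flat angle $\pi$; the correct statement is only that some nonnegative integer combination of dihedral angles equals $\pi$ or $2\pi$, and since \emph{every} copy of a given edge could be of the $\pi$ type, demanding a $2\pi$-combination through each edge could wrongly exclude tetrahedra that do tile. Your vertex condition has the same defect: a vertex of one tile may lie in the relative interior of an edge or a face of a neighboring tile, contributing a wedge or a half-space to the $4\pi$ of solid angle, so the solid angles of $T$ alone need not account for it.

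More seriously, the corrected local conditions do not force anything for the family. For every $T_x\in\msc{F}_2$ one has $3\left(\tfrac{2\pi}{3}-x\right)+3x=2\pi$ and $\left(\tfrac{5\pi}{6}-x\right)+\left(\tfrac{\pi}{6}+x\right)=\pi$, so a legal combination exists through every edge identically in $x$; your plan of extracting an affine constraint forcing $r=1/3$ cannot get off the ground. The paper instead splits tilings into non-face-to-face ones, ruled out by the geometric refinement in Proposition~\ref{prop:crtn} (a non-face-to-face tiling forces two \emph{incident} edges to lie on a face, whereas the unique dihedral $\pi$-combination of a general $T_x$ involves only an opposite pair), and face-to-face ones, ruled out by Proposition~\ref{prop:crtf} (for parallelogram-like tetrahedra a face-to-face tiling is generated by reflections, forcing every dihedral angle to divide $2\pi$, which fails for $x\neq\pi/3$). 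Finally, two of the sporadic tetrahedra and one specific member of $\msc{F}_2$ survive all such local tests and are eliminated only by the global counting argument of Corollary~\ref{cor:crtl}, a Farkas-lemma obstruction built from the asymptotic densities of angle-combinations in a large ball; nothing in your proposal plays this role, so your finite integer programs would not close those cases.
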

%Some involve considering the angles around an edge that makes up $2\pi$, while others come from linear program duality.
\subsection*{Acknowledgements}
The authors would like to extend our profound gratitude towards Professor Bjorn Poonen for advising this project and for many insightful discussions.

\section{Criteria for Tiling Tetrahedra}\label{sec:criteria}
\begin{definition}\label{def:gen-def}
We first establish some terminology.
\begin{itemize}
    \item For non-zero real numbers $a,b$, we say that $a$ divides $b$, written $a|b,$ if the ratio $\frac{b}{a}\in\Z$.
    \item Two edges of tetrahedron are \emph{incident} if they share a vertex and \emph{opposite} otherwise.
    \item We use \emph{combination} to mean nonnegative integer combinations. An element is \emph{involved} in a combination if it has positive coefficient as opposed to $0$.Define a \emph{dihedral $\pi$-combination} as a combination of dihedral angles of a tetrahedron that gives $\pi$.
\end{itemize}
\end{definition}
Note that every tetrahedron has $3$ pairs of opposite edges and $12$ pairs of incident edges.
The motivation for the last definition and the starting point of our criteria lies in the following simple observation: the $2\pi$ angle around an edge must be made up of dihedral angles and possibly one face that contributes $\pi$, so a combination of dihedral angles gives $\pi$ or $2\pi$. 

Motivated by this, we classify tilings according to if any face contributes, as in \cite{sommerville1922space}.
\begin{definition}\label{def:ff-nf}
A tiling by tetrahedron $T$ is \emph{face-to-face} if no edge of one copy of $T$ intersects the interior of a face of a possibly different copy of $T$. It is \emph{non-face-to-face} otherwise.
\end{definition}
\subsection{Criteria for non-face-to-face tilings}\label{sec:nf}
For non-face-to-face tilings, we present the following criteria. Heuristically, both conditions are similar to the aforementioned simple observation about combinations of angles used to make up the $2\pi$ angle around an edge.
\begin{proposition}\label{prop:crtn}
Tetrahedron $T$ has no non-face-to-face tiling if either
\begin{enumerate}
\item $T$ has no dihedral $\pi$-combination;
\item $T$ has only one dihedral $\pi$-combination and it involves only a pair of opposite edges.
% \item Every combination of the $12$ planar angles that gives $\pi$ or $2\pi$ involves a planar angle that has some edge whose dihedral angle is not involved in any dihedral $\pi$-combination.
\end{enumerate}
\end{proposition}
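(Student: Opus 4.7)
The approach is by contrapositive: I assume that a non-face-to-face tiling of $\R^3$ by congruent copies of $T$ exists and extract a dihedral $\pi$-combination whose structure contradicts (1) or (2). By Definition~\ref{def:ff-nf} there exist copies $T_0, T_1$ of $T$ in the tiling and an edge $e$ of $T_0$ that meets $\int(F)$ for some face $F$ of $T_1$. I first observe that $e$ must lie in the plane of $F$: otherwise $e$ would cross $F$ transversely at some point, so arbitrarily near that point $T_0$ would contain points on both sides of $F$, but one side lies locally inside $T_1$, contradicting the disjointness of interiors in the tiling. Hence $s := e \cap F$ is a nontrivial segment whose relative interior is contained in $\int(F)$.

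Next, I pick $p$ in the relative interior of $s$ and slice by the disk $D_p$ perpendicular to $e$ at $p$. The plane of $F$ bisects $D_p$ into two half-disks of angular width $\pi$. On the side containing $\int(T_1)$, the boundary of $T_1$ is locally the plane of $F$, so $T_1$ alone fills that half-disk. On the opposite half-disk, $T_0$ together with the copies meeting $e$ near $p$ must partition the remaining $\pi$ of angle, each contributing its dihedral angle along the edge it shares with $e$ locally. Since $T_0$ itself is one of those copies, $\alpha_e$ appears with positive coefficient, producing a dihedral $\pi$-combination of $T$ that involves $\alpha_e$. Part~(1) follows immediately, contradicting the hypothesized absence of any dihedral $\pi$-combination.

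For part~(2), the combination produced above must equal the unique dihedral $\pi$-combination of $T$, which by hypothesis involves only a pair of opposite edges; after relabeling, assume these are $\alpha_{12}$ and $\alpha_{34}$. Then $\alpha_e \in \{\alpha_{12}, \alpha_{34}\}$, and I take $\alpha_e = \alpha_{12}$ by relabeling further if necessary, so every copy meeting $e$ near $p$ on $T_0$'s side of $F$ has its edge along $e$ being either edge $12$ or edge $34$. I then study the copy $T_2$ adjacent to $T_0$ across one of $T_0$'s two faces incident to $e$, namely $f_1 = 123$ or $f_2 = 124$. The face of $T_2$ glued to (part of) $f_1$ must be isometric to a face of $T$ containing an edge of type $12$ or $34$ and must align consistently with vertex positions at the endpoints $v_1, v_2$ of $e$. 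Since opposite edges have disjoint endpoint sets $\{1, 2\}$ and $\{3, 4\}$, the admissible vertex labels at $v_1, v_2$ split into these two disjoint pairs. I would combine this combinatorial rigidity with the uniqueness of the $\pi$-combination to derive a contradiction, for example by showing that the solid angles on $T_0$'s side at $v_1$ cannot total $2\pi$, or by producing a second independent dihedral $\pi$-combination.

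The main obstacle is this last geometric step for (2). The cross-sectional angle argument that settles (1) is essentially a one-line local computation, but for (2) the angle equation alone does not yet forbid the tiling; one must further exploit the opposite-edge structure together with the uniqueness of the combination. I expect the contradiction to arise from tracking how the copies packed around $e$ on one side of $F$ must glue consistently across $f_1, f_2$ and how their vertex assignments at $v_1, v_2$ are forced, with the disjointness of $\{1,2\}$ and $\{3,4\}$ preventing any consistent such gluing.
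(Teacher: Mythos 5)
Your argument for part (1) is essentially the paper's: a nontrivial subsegment of $e$ lies in $\int F$ inside the plane of $F$, the copy $T_1$ contributes the half-turn $\pi$ on its side, and the remaining angle $\pi$ around $e$ must be filled by dihedral angles of the copies sharing that subsegment, including $\alpha_e$ itself with positive coefficient; this is a dihedral $\pi$-combination. (Both you and the paper quietly discard the degenerate possibility that $e$ meets $\int F$ only at a vertex of $T_0$, but the main case is handled correctly.)

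Part (2), however, is not proved. All you extract is that $\alpha_e$, and the edge-types of the copies stacked around $e$ on $T_0$'s side of $F$, belong to the distinguished opposite pair; you then say you ``would combine this combinatorial rigidity with the uniqueness of the $\pi$-combination to derive a contradiction'' and concede that ``the main obstacle is this last geometric step.'' That step is the entire content of (2): as you yourself observe, the angle count around $e$ is perfectly consistent with the hypothesis of (2), so stopping there proves nothing. The route you sketch --- tracking which labelled faces of which copies are glued across $f_1$ and $f_2$ and hoping for a solid-angle obstruction at $v_1$ --- is not obviously convergent: nothing established so far constrains those gluings, and two distinct copies can both present a type-$12$ edge along $e$ with no labelling conflict. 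The paper closes the gap by a different mechanism: it reduces (2) to exhibiting two \emph{incident} edges (edges of a single tetrahedron sharing a vertex) each of which lies in the interior of a face of some other copy. Each such edge then yields, by the part-(1) argument, a dihedral $\pi$-combination involving its own dihedral angle; by the uniqueness hypothesis both edges must belong to the fixed pair of opposite edges, which is impossible because opposite edges are not incident. Such a pair is then produced by a short case analysis in the plane of $F'$: either an endpoint of $e$ lies in $\int F'$, or a vertex $v'$ of $F'$ on one side of the line through $e$ either is covered by a face of $T_0$ in that plane (so the two edges of $F'$ at $v'$ work) or is not, in which case a second edge of $T_0$ also crosses $\int F'$ and, being coplanar with $e$, is incident to it. Your proposal needs some such device converting ``the unique combination involves only opposite edges'' into a contradiction; as written it stops just short of the theorem.
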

\begin{proof}
Suppose $T$ has a non-face-to-face tiling, then some edge $e$ intersects the interior of face $F'$ of another copy of $T$. 
For (1), note that the $2\pi$ angle around the part of $e$ that lies on $F'$ is made up by $\pi$ from $F'$ and dihedral angles including $e$. Counting contributions by dihedral angles gives the desired combination.

For (2), since non-opposite edges are incident, it suffices to find two incident edges that both lie on the face of a different copy of $T$.
If endpoint $v$ of $e$ lies in $F'$, then the two edges incident to $v$ that lie on the plane containing $F'$ satisfy the condition needed for (2).

Otherwise, both endpoints of $e$ are outside $F'$. The line though $e$ on the plane containing $F'$ divides the $3$ vertices of $F'$ into two sides, both of which contains at least one vertex of $F'$. Consider the side of $e$ that has one vertex. Call the vertex $v'$.

Suppose $v'$ lies in $F$, then the two edges of $T'$ incident to $v'$ satisfy the condition needed for (2).
If $v'$ does not lie in $F$, then another edge $e_{1}$ of $T$ also intersects $\int F'$. Then, $e$ and $e'$ are coplanar edges of $T$, so they are incident. They satisfy the condition needed for (2).
\end{proof}
\begin{remark}
Proposition~\ref{prop:crtn} is implemented in \cite{criteriapy} as CRTN by generating the dihedral $\pi$-combinations that give $\pi$ or $2\pi$ and checking the conditions.
\end{remark}
% Condition (1) or (2) is programmed in \cite{criteriapy} as CRTN12:
% \begin{itemize}

% \item PID(T) returns a list of possible combinations of dihedral angles of $T$ that give $\pi$
% \item $C$ marks the dihedral angles that are involved in any of the combinations
% \item CRTN12 returns TRUE iff we marked none or only one pair of opposite edges
% \end{itemize}
% Checking $12$-dimensional combinations makes (3) complex. This is programmed as CRTN3:
% \begin{itemize}

% \item When PIP($T$) checks over all combinations, it returns the valid ones in a list $P$
% \item EUD(PID($T$)) counts the number of times each angle is involved in a $\pi$ combination
% \item For each $P[j]$, EUP($P[j]$) returns the edges in some planar angle involved in $P[j]$
% \item Mark off $P[j]$ if it uses an edge not in any combination by checking EUD(PID($T$)) 
% \item CRTN3 returns TRUE if and only if we marked off all the edges, as per Claim 3.5
% \end{itemize}
\subsection{Criteria for face-to-face tilings}\label{sec:ff}
We can rule out face-to-face tilings for specific kinds of tetrahedra depending on the edge lengths. Fortunately, this is sufficient for Lemma~\ref{lem:negative}.
\begin{definition}\label{def:para}
A tetrahedron is \emph{parallelogram-like} if exactly two pairs of opposite edges are congruent and are not equal between them, i.e. the edge lengths in the usual order are of the form $(a, a, b, b, c, d), (a, b, c, c, d, d)$, or $(a, a, b, c, d, d)$. 
\end{definition}
As the name suggests, these tetrahedra arise when we fold a parallelogram along a diagonal.
\begin{lemma}\label{lem:ff-lem}
Given distinct polygons $P$ and $Q$ lying in the same plane, if $\int P\cap\int Q\neq\varnothing$, then $\bd P\cap \int Q\neq\varnothing$ or $\int P\cap \bd Q\neq\varnothing$.
\end{lemma}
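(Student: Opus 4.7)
The plan is to argue by contradiction: assume both $\bd P \cap \int Q = \varnothing$ and $\int P \cap \bd Q = \varnothing$, and deduce $P = Q$, contradicting distinctness. The key tool is connectedness of the interior of a polygon together with the partition of the ambient plane into $\int Q$, $\bd Q$, and $\R^2 \setminus Q$ (and symmetrically for $P$).

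First I would record the topological setup. Since $Q$ is a closed polygonal region, $\R^2$ decomposes as the disjoint union $\int Q \sqcup \bd Q \sqcup (\R^2 \setminus Q)$, where $\int Q$ and $\R^2 \setminus Q$ are open. Intersecting with $\int P$ gives
\[
\int P \;=\; (\int P \cap \int Q) \;\sqcup\; (\int P \cap \bd Q) \;\sqcup\; (\int P \setminus Q).
\]
Under the assumption $\int P \cap \bd Q = \varnothing$, the remaining two pieces are both open in $\int P$. Since $\int P$ is connected (a standard polygonal region has connected interior) and since $\int P \cap \int Q \neq \varnothing$ by hypothesis, the other piece $\int P \setminus Q$ must be empty. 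Therefore $\int P \subset \int Q \subset Q$, and taking closures yields $P = \overline{\int P} \subset \overline{Q} = Q$.

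Next I would apply the symmetric argument: the assumption $\bd P \cap \int Q = \varnothing$ (equivalently $\int Q \cap \bd P = \varnothing$) and the decomposition
\[
\int Q \;=\; (\int Q \cap \int P) \;\sqcup\; (\int Q \cap \bd P) \;\sqcup\; (\int Q \setminus P),
\]
combined with connectedness of $\int Q$ and nonemptiness of $\int P \cap \int Q$, force $\int Q \subset P$, and hence $Q \subset P$. Combining the two inclusions gives $P = Q$, contradicting the hypothesis that $P$ and $Q$ are distinct.

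There is no serious obstacle; the only step deserving care is the implicit appeal to $\int P$ (and $\int Q$) being connected and satisfying $P = \overline{\int P}$. For the polygons arising in this paper these are triangular faces of tetrahedra, so both properties are immediate, but I would state them explicitly at the start of the proof so the connectedness argument is unambiguous.
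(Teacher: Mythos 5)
Your proof is correct, but it takes a genuinely different route from the paper's. The paper argues directly from a common interior point $x \in \int P \cap \int Q$: it shoots rays from $x$, compares the first boundary hits $p \in \bd P$ and $q \in \bd Q$ along each ray, observes that a strict inequality $\Vert p - x\Vert < \Vert q - x\Vert$ would put $p \in \bd P \cap \int Q$, and concludes $p = q$ for every ray, hence $P = Q$. Your argument instead writes $\int P$ as the disjoint union of the two relatively open sets $\int P \cap \int Q$ and $\int P \setminus Q$ (using the hypothesis $\int P \cap \bd Q = \varnothing$), invokes connectedness of $\int P$ to force $\int P \subset \int Q$, and symmetrizes. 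Your version is the more robust of the two: it needs only that each region is the closure of its connected interior, with no appeal to convexity or to each ray meeting the boundary in a well-defined first point — assumptions that the paper's ray argument leans on implicitly and that are cleanest for the convex (triangular) faces actually used. You were right to flag the two standing facts ($\int P$ connected and $P = \overline{\int P}$) explicitly, since for general non-simple polygons these would be the only points requiring care.
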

\begin{proof}
Suppose otherwise, let $x\in \int P\cap \int Q$. Take any ray $\ell$ at $x$ intersecting $\bd P$ and $\bd Q$ at $p$ and $q$, respectively. Without loss of generality, assume that $\Vert p-x\Vert \leq \Vert q-x\Vert$. If the inequality is strict, then $p\in \int Q$, contradiction. Thus, it must be $p=q$. Since this holds for every ray $\ell$ at $x$, it must be $P$ and $Q$ overlap precisely, i.e. $P=Q$, a contradiction.
\end{proof}
We now present the criterion to rule out face-to-face tilings for special tetrahedra.
\begin{proposition}\label{prop:crtf}
For some tetrahedron $T$ that either is parallelogram-like or has distinct edge-lengths, $T$ has no face-to-face tiling if some dihedral angle of $T$ does not divide $2\pi$.
\end{proposition}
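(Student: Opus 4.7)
The plan is to argue by contradiction. Suppose that $T$ admits a face-to-face tiling and let $e^{*}$ be an edge of $T$ whose dihedral angle $\alpha$ does not divide $2\pi$; set $\ell = |e^{*}|$. Fix a copy $T_{0}$ of $T$ in the tiling and write $e_{0} \subset T_{0}$ for the image of $e^{*}$. The strategy is to show that every copy of $T$ in the tiling that meets $e_{0}$ actually shares $e_{0}$ as a complete edge and contributes the same dihedral angle $\alpha$ there, producing a relation $n\alpha = 2\pi$ that contradicts the hypothesis.

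The main technical step, and where I plan to invoke Lemma~\ref{lem:ff-lem}, is the rigidity of shared faces in any face-to-face tiling: if $F$ is a face of $T_{0}$ and $F'$ is a face of another copy lying in the plane of $F$ with $\int F \cap \int F' \neq \varnothing$, then $F = F'$. Indeed, if $F \neq F'$, Lemma~\ref{lem:ff-lem} produces a point either in $\bd F \cap \int F'$ or in $\int F \cap \bd F'$; in either case an edge of one copy of $T$ would lie inside the open face of another, contradicting the face-to-face hypothesis. It follows that each face of $T_{0}$ is shared in its entirety with exactly one adjacent copy, and hence that each edge of $T_{0}$ is a complete edge of every copy that meets it. In particular, the copies surrounding $e_{0}$ form a cyclic chain $T_{0}, T_{1}, \ldots, T_{n-1}$ each of which carries $e_{0}$ as a full edge of length $\ell$, with dihedral angles at $e_{0}$ summing to $2\pi$.

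The final step pins down each dihedral angle in that sum using the hypothesis on edge lengths. In the distinct-edge-length case, $\ell$ is attained by the unique edge $e^{*}$, so each $T_{i}$ contributes $\alpha$ and $n\alpha = 2\pi$ gives the desired contradiction. In the parallelogram-like case, the same argument closes at once when $e^{*}$ is one of the unpaired edges, whose lengths are unique among the six by Definition~\ref{def:para}. The hardest part of the plan, I expect, is the remaining subcase in which the bad dihedral angle sits on a paired edge: then the opposite edge of $T$ shares the length $\ell$ and can a priori contribute a different dihedral angle $\alpha'$ at $e_{0}$, so one obtains only a mixed relation $m\alpha + m'\alpha' = 2\pi$. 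To handle this subcase, I would combine that relation with the analogous cyclic sum around a copy of the opposite edge, and exploit the alternation of face types around $e_{0}$ forced by the rigid face-matching step to constrain which of $\alpha$ or $\alpha'$ each copy may contribute, ultimately forcing $\alpha \mid 2\pi$.
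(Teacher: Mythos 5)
Your outline follows the paper's proof quite closely: both use Lemma~\ref{lem:ff-lem} to show that in a face-to-face tiling faces are matched exactly (hence edges land on complete edges), and both then read off a relation of the form ``sum of dihedral angles around an edge equals $2\pi$'' and try to force every summand to be the same angle $\alpha$. Your treatment of the distinct-edge-length case is correct and in fact a bit more direct than the paper's (which phrases it as ``the tiling is generated by reflections''): since all six lengths are distinct, any full edge of a neighboring copy coinciding with $e_0$ must be the image of $e^*$, so $n\alpha=2\pi$.

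The genuine gap is the subcase you yourself flag as hardest: a paired edge of a parallelogram-like $T$. There you stop at the mixed relation $m\alpha+m'\alpha'=2\pi$ and propose to close it by combining with the analogous relation around the opposite edge and ``exploiting alternation of face types.'' As stated this does not suffice: relations such as $m\alpha+m'\alpha'=2\pi$ and $n\alpha'+n'\alpha=2\pi$ constrain only linear combinations of $\alpha$ and $\alpha'$ and cannot by themselves force $\alpha\mid 2\pi$ (take $\alpha,\alpha'$ symmetric about $2\pi/(m+m')$). The missing idea is that for a parallelogram-like $T$ the two angles are \emph{equal}: by Definition~\ref{def:para} there is a double transposition of the vertex labels (e.g.\ $(1\,2)(3\,4)$ for the pattern $(a,b,c,c,d,d)$) that swaps the two edges within each congruent opposite pair and fixes the other two edges, hence preserves all six edge lengths and extends to a self-isometry of $T$; isometries preserve dihedral angles, so $\alpha'=\alpha$ and your relation collapses to $(m+m')\alpha=2\pi$. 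This symmetry is exactly what the paper's Figure~\ref{fig:para} encodes when it says the two possible gluings along a face are ``identical up to relabeling,'' so that only one angle value ever appears around any edge. With that one observation supplied, your argument closes; without it, the parallelogram-like half of the proposition is not proved. (You should also be a little careful with the claim that the unpaired edges have lengths ``unique among the six'': Definition~\ref{def:para} only constrains opposite pairs, but the symmetry argument above makes this point moot, since whenever two edges of $T$ can be confused by length through the pairing they carry the same dihedral angle.)
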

\begin{proof}
By Lemma~\ref{lem:ff-lem}, pairs of faces in a face-to-face tiling fit exactly, so edges fit on edges. Consider a copy of tetrahedron $T$ and face $F=(123)$. 
If $T$ has distinct edge-lengths, $T$ has no congruent faces, so $F$ must be glued precisely to the same face of another copy $T'$. Since no face is isosceles, $T'$ is obtained by reflecting $T$ over the plane containing the two faces. This holds for every face in every copy of $T$, so the tiling is generated by repeated reflections.

For parallelogram-like $T$, by Definition~\ref{def:para}, to glue onto face $F$, we can use $F$ or the face $F'=(1'2'4')$ congruent to $F$. We see in Figure~\ref{fig:para} that they are identical up to relabeling.

\tikzset{every picture/.style={line width=0.75pt}} %set default line width to 0.75pt  
\begin{figure}[H]
\centering
\begin{tikzpicture}[x=0.75pt,y=0.75pt,yscale=-1.35,xscale=1.35]
%uncomment if require: \path (0,300); %set diagram left start at 0, and has height of 300

%Straight Lines [id:da532405519120754] 
\draw [color={rgb, 255:red, 189; green, 16; blue, 224 }  ,draw opacity=1 ] [dash pattern={on 4.5pt off 4.5pt}]  (157,100) -- (237,130) ;
%Straight Lines [id:da3929524735619039] 
\draw [color={rgb, 255:red, 189; green, 16; blue, 224 }  ,draw opacity=1 ]   (227,80) -- (177,170) ;
%Straight Lines [id:da3735342563680899] 
\draw    (227,80) -- (187,70) ;
%Straight Lines [id:da32358887701209915] 
\draw [color={rgb, 255:red, 189; green, 16; blue, 224 }  ,draw opacity=1 ]   (187,70) -- (157,100) ;
%Straight Lines [id:da5162765896630304] 
\draw [color={rgb, 255:red, 80; green, 227; blue, 194 }  ,draw opacity=1 ]   (187,70) -- (177,170) ;
%Straight Lines [id:da5268892064255912] 
\draw [color={rgb, 255:red, 80; green, 227; blue, 194 }  ,draw opacity=1 ] [dash pattern={on 4.5pt off 4.5pt}]  (157,100) -- (227,80) ;
%Straight Lines [id:da25843302213932384] 
\draw [color={rgb, 255:red, 245; green, 166; blue, 35 }  ,draw opacity=1 ]   (157,100) -- (177,170) ;
%Straight Lines [id:da5488706160619536] 
\draw [color={rgb, 255:red, 80; green, 227; blue, 194 }  ,draw opacity=1 ]   (177,170) -- (237,130) ;
%Straight Lines [id:da06463860488183082] 
\draw    (227,80) -- (237,130) ;
%Straight Lines [id:da5714843476596865] 
\draw [color={rgb, 255:red, 189; green, 16; blue, 224 }  ,draw opacity=1 ] [dash pattern={on 4.5pt off 4.5pt}]  (21,100) -- (101,130) ;
%Straight Lines [id:da2205179284601786] 
\draw [color={rgb, 255:red, 189; green, 16; blue, 224 }  ,draw opacity=1 ]   (91,80) -- (41,170) ;
%Straight Lines [id:da08444694370216155] 
\draw [color={rgb, 255:red, 80; green, 227; blue, 194 }  ,draw opacity=1 ]   (21,100) -- (91,80) ;
%Straight Lines [id:da23262673939619427] 
\draw [color={rgb, 255:red, 245; green, 166; blue, 35 }  ,draw opacity=1 ]   (21,100) -- (41,170) ;
%Straight Lines [id:da6362062156208845] 
\draw [color={rgb, 255:red, 80; green, 227; blue, 194 }  ,draw opacity=1 ]   (41,170) -- (101,130) ;
%Straight Lines [id:da7574690993537141] 
\draw    (91,80) -- (101,130) ;
%Straight Lines [id:da36573513784566036] 
\draw [color={rgb, 255:red, 189; green, 16; blue, 224 }  ,draw opacity=1 ] [dash pattern={on 4.5pt off 4.5pt}]  (295,100) -- (375,130) ;
%Straight Lines [id:da7378593090343126] 
\draw [color={rgb, 255:red, 189; green, 16; blue, 224 }  ,draw opacity=1 ]   (365,80) -- (315,170) ;
%Straight Lines [id:da05928730145668637] 
\draw    (365,80) -- (325,70) ;
%Straight Lines [id:da6674751160150338] 
\draw [color={rgb, 255:red, 189; green, 16; blue, 224 }  ,draw opacity=1 ]   (325,70) -- (295,100) ;
%Straight Lines [id:da8789769732214103] 
\draw [color={rgb, 255:red, 80; green, 227; blue, 194 }  ,draw opacity=1 ]   (325,70) -- (315,170) ;
%Straight Lines [id:da28264796896973654] 
\draw [color={rgb, 255:red, 80; green, 227; blue, 194 }  ,draw opacity=1 ] [dash pattern={on 4.5pt off 4.5pt}]  (295,100) -- (365,80) ;
%Straight Lines [id:da3419502968124972] 
\draw [color={rgb, 255:red, 245; green, 166; blue, 35 }  ,draw opacity=1 ]   (295,100) -- (315,170) ;
%Straight Lines [id:da3782352800582982] 
\draw [color={rgb, 255:red, 80; green, 227; blue, 194 }  ,draw opacity=1 ]   (315,170) -- (375,130) ;
%Straight Lines [id:da04224792009121914] 
\draw    (365,80) -- (375,130) ;

% Text Node
\draw (228,73.4) node [anchor=north west][inner sep=0.75pt]  [font=\tiny]  {$1=1'$};
% Text Node
\draw (131,102.4) node [anchor=north west][inner sep=0.75pt]  [font=\tiny]  {$2=2'$};
% Text Node
\draw (178,173.4) node [anchor=north west][inner sep=0.75pt]  [font=\tiny]  {$3=3'$};
% Text Node
\draw (238,132.4) node [anchor=north west][inner sep=0.75pt]  [font=\tiny]  {$4$};
% Text Node
\draw (188,62.4) node [anchor=north west][inner sep=0.75pt]  [font=\tiny]  {$4'$};
% Text Node
\draw (150,188) node [anchor=north west][inner sep=0.75pt]  [font=\scriptsize] [align=left] {Reflection Scheme};
% Text Node
\draw (92,73.4) node [anchor=north west][inner sep=0.75pt]  [font=\tiny]  {$1$};
% Text Node
\draw (12,103.4) node [anchor=north west][inner sep=0.75pt]  [font=\tiny]  {$2$};
% Text Node
\draw (42,173.4) node [anchor=north west][inner sep=0.75pt]  [font=\tiny]  {$3$};
% Text Node
\draw (103,133.4) node [anchor=north west][inner sep=0.75pt]  [font=\tiny]  {$4$};
% Text Node
\draw (11,182) node [anchor=north west][inner sep=0.75pt]  [font=\scriptsize] [align=left] {$ $};
% Text Node
\draw (58,182.4) node [anchor=north west][inner sep=0.75pt]  [font=\scriptsize]  {$T$};
% Text Node
\draw (366,73.4) node [anchor=north west][inner sep=0.75pt]  [font=\tiny]  {$1=2'$};
% Text Node
\draw (271,102.4) node [anchor=north west][inner sep=0.75pt]  [font=\tiny]  {$2=1'$};
% Text Node
\draw (317,173.4) node [anchor=north west][inner sep=0.75pt]  [font=\tiny]  {$3=4'$};
% Text Node
\draw (376,132.4) node [anchor=north west][inner sep=0.75pt]  [font=\tiny]  {$4$};
% Text Node
\draw (328,62.4) node [anchor=north west][inner sep=0.75pt]  [font=\tiny]  {$3'$};
% Text Node
\draw (311,188) node [anchor=north west][inner sep=0.75pt]  [font=\scriptsize] [align=left] {Other Tiling};
% Text Node
\draw (218,149.4) node [anchor=north west][inner sep=0.75pt]  [font=\scriptsize]  {$T$};
% Text Node
\draw (161,72.4) node [anchor=north west][inner sep=0.75pt]  [font=\scriptsize]  {$T'$};
% Text Node
\draw (358,149.4) node [anchor=north west][inner sep=0.75pt]  [font=\scriptsize]  {$T$};
% Text Node
\draw (298,72.4) node [anchor=north west][inner sep=0.75pt]  [font=\scriptsize]  {$T'$};

\end{tikzpicture}
\caption{Two ways to tile parallelogram-like tetrahedra.}
\label{fig:para}
\end{figure}
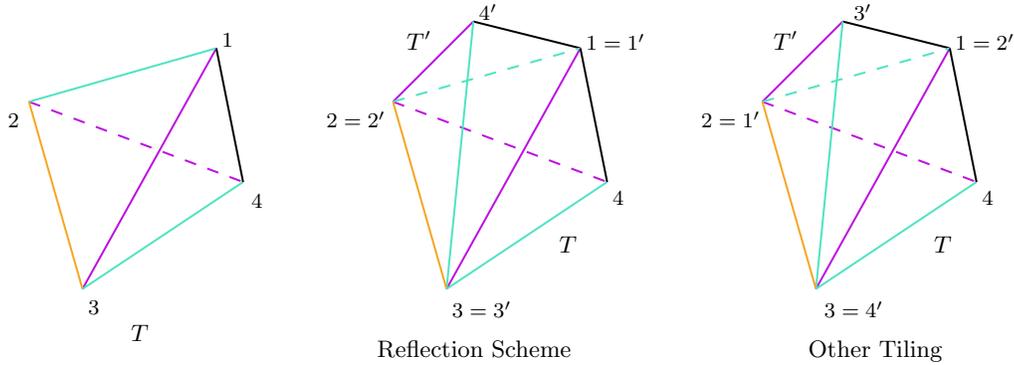

In both cases, it suffices to show that repeated reflections cannot generate a tiling. Take any edge $e$ and let its dihedral angle be $\alpha$. Under the reflection schedule, the dihedral angles around $e$ must are copies of the same angle with measure $\alpha$, so $\alpha\mid 2\pi$ for every edge $e$.
\end{proof}
\begin{remark}
Proposition~\ref{prop:crtf} is implemented in \cite{criteriapy} as CRTF by computing the edge lengths and dihedral angles and checking the conditions. Note that edge lengths of a tetrahedron can be computed from its dihedral angles via \cite{wirth2014relations}, as implemented in \cite{tetrapy}.
\end{remark}
\subsection{Criteria from linear programming}\label{sec:lp}
Let $\M$ be the moduli space of unit-volume tetrahedra with labeled vertices, up to congruence preserving the labeling.
Fix any tiling of $\mb{R}^3$ using shape $T\in\msc{M}$.
Define the diameter $\gamma = \max \{\Vert x-y\Vert:x, y\in T\}>0$ by the extreme value theorem.
We parametrize the tiling as follows. Since $\mb{Q}^3$ is dense in $\mb{R}^3$, each copy of $T$ in the tiling contains a rational point, so there are countable many copies of $T$. We enumerate them as $T_1, T_2, \dots$ in increasing order of $R_{i} := \max\{\Vert x\Vert:x\in T_i\}$. This is defined by the extreme value theorem. Hence, for every $r\geq 0$, there exists $n(r)\in\mb{Z}_{\ge 0}$ such that $T_{i}\subset B(r, 0)$ if and only if $1\le i\leq n(r)$.

\begin{lemma}\label{lem:nr-bd}
$4\pi (r-\gamma)^{3}/3\leq n(r)\leq 4\pi r^{3}/3$ for every $r\geq \gamma$.
\end{lemma}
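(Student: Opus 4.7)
The plan is to bound $n(r)$ above and below by comparing volumes, using the fact that each tile has unit volume and that all tiles have diameter at most $\gamma$.

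For the upper bound, I would observe that if $T_i \subset B(r,0)$ then the union $\bigcup_{i=1}^{n(r)} T_i$ is contained in $B(r,0)$. Since distinct tiles have disjoint interiors and each has unit volume, additivity of Lebesgue measure gives
\[
n(r) = \sum_{i=1}^{n(r)} \vol(T_i) = \vol\!\left(\bigcup_{i=1}^{n(r)} T_i\right) \le \vol(B(r,0)) = \tfrac{4\pi r^3}{3}.
\]

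For the lower bound, the key idea is to show that every tile meeting the smaller ball $B(r-\gamma, 0)$ is in fact contained in $B(r,0)$, so it is counted by $n(r)$. Indeed, suppose a tile $T_i$ contains a point $p$ with $\|p\| \le r-\gamma$. For any $x \in T_i$, we have $\|x - p\| \le \diam(T_i) \le \gamma$, so $\|x\| \le \|p\| + \|x-p\| \le (r-\gamma) + \gamma = r$, and thus $T_i \subset B(r,0)$. Since the tiling covers all of $\R^3$, every point of $B(r-\gamma, 0)$ lies in some tile, and that tile is among $T_1,\ldots,T_{n(r)}$. Hence
\[
B(r-\gamma, 0) \subset \bigcup_{i=1}^{n(r)} T_i,
\]
and taking volumes on both sides yields $\tfrac{4\pi(r-\gamma)^3}{3} \le n(r)$.

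There is no real obstacle here; the only thing to be slightly careful about is that the tiles overlap only on boundaries (measure-zero sets), so additivity of volume over the $T_i$ is justified. Both inequalities follow immediately by monotonicity and additivity of Lebesgue measure, using $\diam(T_i) \le \gamma$ for the inward shrinkage in the lower bound.
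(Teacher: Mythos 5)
Your proof is correct and follows essentially the same route as the paper: the upper bound by volume comparison inside $B(r,0)$, and the lower bound via the triangle inequality with the diameter $\gamma$ to show $B(r-\gamma,0)$ is covered by $T_1,\dots,T_{n(r)}$ (the paper phrases this step contrapositively, but the argument is identical).
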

\begin{proof}
The upper-bound is clear as $n(r)$ copies of $T$ with volume $1$ lies in $B(r, 0)$ with volume $\frac{4}{3}\pi r^{3}$. 
For the lower bound, suppose $p$ does not lie in $T_i$ for all $1\le i \le n(r)$. Then, it lies in $T_j$ for some $j>n(r)$, so $T_j$ is not contained in $B(r, 0)$. Hence, there is some $q\in T_{j}$ such that $\Vert q\Vert> r$. Then, as $p, q\in T_j$, by the triangle inequality and definition of diameter $\gamma$,
\[ \Vert p\Vert \geq \Vert q\Vert -\Vert q-p\Vert > r-\gamma. \]
Hence, $T_1, \dots , T_{n(r)}$ together cover $B(r-\gamma, 0)$. This establishes the lower-bound for $n(r)$.
\end{proof}
Let $C_{1}, \dots , C_{s}$ be all possible combinations of dihedral angles of $T$ that give $\pi$ or $2\pi$. 
For each edge $e$ of $T$, let $C_{i}(e)\in \mb{Z}_{\ge 0}$ denote the coefficient of the dihedral angle at $e$ in the combination $C_{i}$. In particular, $C_i$ involves $e$ if and only if $C_i(e)>0$ by Definition~\ref{def:gen-def}.

Let $L_{i}(r)$ be the set of points $p$ in space that lies on some edge $e$ of some copy $T_{j}$ for $1\leq j\leq n(r)$ such that the $2\pi$ angle around $e$ at point $p$ is made up of combination $C_{i}$ (with possibly a face contributing $\pi$) and it involves dihedral angles from $\lbrace T_{j}: 1\leq j\leq n(r)\rbrace$ only. 
Note, there may be some points on edges whose combination involves some $T_j$ with $j\le n(r)$ and some other $T_J$ with $J >n(r)$, so they do not lie in any $L_{i}(r)$. We collect them into
\[ L_{0}(r) = \left(\bigcup_{j=1}^{n(r)}\bigcup_{\text{edge $e$ of $T_j$}} e\right)\Big\bs \left(\bigcup_{i=1}^{s}L_{i}(r)\right).\]
For every $r\geq 0$, we have $n(r)$ finitely many copies of $T$ to consider. Hence, for each $0\leq i\leq n$, $L_{i}(r)$ is a union of finitely many segments in space, so define lengths $\ell_{i}(r)=\len(L_{i}(r))$. As vertices of $T_j$ contribute trivial length, so we need not consider them.
\begin{lemma}\label{lem:l0r-bd}
$\ell_0(r) = O(r^2)$ for any fixed tiling $(T_i)_i$ of some shape $T\in \msc{M}$.
\end{lemma}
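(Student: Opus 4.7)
The plan is to show that $L_0(r)$ is confined to a thin spherical shell near the boundary of $B(r,0)$, and then to use the unit-volume condition to bound the number of tetrahedra that can contribute edges to that shell.

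First I would establish geometric localization. Suppose $p \in L_0(r)$, so $p$ lies on an edge of some $T_j$ with $j \leq n(r)$, while the $2\pi$ angle around that edge at $p$ involves a tetrahedron $T_J$ with $J > n(r)$. Because $T_j \subset B(r,0)$ by definition of $n(r)$, we have $\|p\| \leq r$. On the other hand, $T_J \not\subset B(r,0)$, so $T_J$ contains a point $q$ with $\|q\| > r$; since $p$ lies on an edge of $T_J$ (it is a point shared by the tetrahedra meeting around the edge), and $T_J$ has diameter at most $\gamma$, the triangle inequality gives $\|p\| \geq \|q\| - \|q-p\| > r - \gamma$. Hence
\[
L_0(r) \;\subset\; A(r) \;:=\; \{p \in \R^3 : r-\gamma \leq \|p\| \leq r\}.
\]

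Next I would count the tetrahedra that can contribute. Let $\mathcal{J}(r)$ be the set of indices $j \leq n(r)$ such that some edge of $T_j$ meets $A(r)$. For each such $j$, the tetrahedron $T_j$ meets $A(r)$ and has diameter at most $\gamma$, so $T_j$ is contained in the thicker shell $A'(r) := \{r - 2\gamma \leq \|p\| \leq r\}$. Because the $T_j$ tile space (hence have pairwise disjoint interiors) and each has unit volume, $|\mathcal{J}(r)|$ is bounded by $\vol(A'(r)) = \tfrac{4\pi}{3}(r^3 - (r-2\gamma)^3) = O(r^2)$, the implied constant depending only on $\gamma$.

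Finally I would assemble the estimate. Every point of $L_0(r)$ lies on some edge of a tetrahedron $T_j$ with $j \in \mathcal{J}(r)$, and each such $T_j$ has six edges, each of length at most $\gamma$. Therefore
\[
\ell_0(r) \;\leq\; 6\gamma \cdot |\mathcal{J}(r)| \;=\; O(r^2),
\]
as claimed. I do not anticipate a real obstacle here; the only delicate point is making sure that any $p \in L_0(r)$ really lies in $T_J$ (so that the diameter bound applies), which follows from the definition of $L_i(r)$ since the combination of dihedral angles around the edge at $p$ is realized by tetrahedra all sharing the point $p$.
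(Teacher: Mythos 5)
Your proof is correct and follows essentially the same route as the paper's: localize $L_0(r)$ to the spherical shell $r-\gamma \le \lVert p\rVert \le r$ via the triangle inequality, bound the number of contributing tetrahedra by the volume $\tfrac{4\pi}{3}\bigl(r^3-(r-2\gamma)^3\bigr)$ of a slightly thicker shell, and multiply by the total edge length $6\gamma$. The only cosmetic difference is that the paper packages the count as $n(r)-n(r-\gamma)$ and cites Lemma~\ref{lem:nr-bd}, whereas you rerun the volume-packing argument directly.
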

% $\ell_{0}(r)\leq \left(n(r)-n(r-2\gamma)\right)\sum_{e}\len(e)\leq \frac{81}{2}\pi\gamma^{2} \left( r^2 - 3\gamma r +3 \gamma^2\right)$ for all $r\geq \gamma$.
\begin{proof}
Any point $p\in L_{0}(r)$ must be on the edges of two copies $T_j$ and $T_J$ where $j\leq n(r)<J$. Note that $T_j\subset B(r, 0)$ and $T_J\not\subset B(r, 0)$. By the triangle inequality and definition of $\gamma$,
\[ r-\gamma < R_J - \gamma \le \Vert p\Vert \le R_j \le r.\]
Now, $T_j\not\subset B(r-\gamma, 0)$, so $j >n(r-\gamma)$. Thus, $L_0(r)$ is covered by the edges of $T_j$ for $n(r-\gamma) < j \le n(r)$. The total edge length of $T$ is trivally at most $6\gamma$, so by Lemma~\ref{lem:nr-bd},
\[ \ell_0(r) \le \left(n(r)-n(r-\gamma)\right)\cdot 6\gamma \le \left(\frac{4}{3}\pi r^{3}-\frac{4}{3}\pi (r-2\gamma)^{3}\right)\cdot 6\gamma = O(r^2). \qedhere\]
\end{proof}
Now, we prove the following crucial observation. For convenience, define
\begin{equation}\label{E:def-LD}
    \lambda_{i}(r)=\frac{\ell_{i}(r)}{n(r)}\quad\text{and}\quad D_{i}(e)=\frac{C_{i}(e)}{\len (e)}.
\end{equation}
\begin{proposition}\label{prop:lp-key}
Fix any tiling $\Ti$ of shape $T\in \msc{M}$. For every edge $e$ of $T$,
\begin{equation}\label{E:lp-key}
\lim_{r\to\infty} \sum_{i=1}^{s}\lambda_{i}(r)D_{i}(e)= 1.
\end{equation}

\end{proposition}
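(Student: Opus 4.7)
The plan is to establish~\eqref{E:lp-key} by double counting the total length of edges of type $e$ among the copies $T_1,\dots,T_{n(r)}$. Each such copy contributes a single edge of type $e$ of length $\len(e)$, so the total is exactly $n(r)\cdot \len(e)$. I will compute the same total by stratifying these edges according to which $L_i(r)$ they lie in, and then let $r\to\infty$.

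The key is the following multiplicity bookkeeping. Ignoring the finite set of vertices of the in-range copies (which carry zero one-dimensional measure), consider any point $p$ in the relative interior of the type-$e$ edge of some copy $T_j$ with $j\leq n(r)$. Then $p$ lies on a unique edge-line $\ell$, and the $2\pi$ angle around $\ell$ at $p$ is filled in by the dihedral angles of the copies of $T$ whose edges meet along $\ell$ at $p$, possibly together with one face contributing $\pi$; by the very definition of $C_1,\dots,C_s$ as all such combinations, this filling realizes some $C_i$. If every contributing copy lies in $\{T_j : j\leq n(r)\}$, then $p\in L_i(r)$; otherwise $p\in L_0(r)$ by construction. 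For each $p\in L_i(r)$ with $i\geq 1$, exactly $C_i(e)$ of the in-range copies meeting at $\ell$ have their type-$e$ edge passing through $p$, so the total length of type-$e$ edges (summed over copies) lying inside $L_i(r)$ equals $C_i(e)\cdot \ell_i(r)$. For $p\in L_0(r)$, the analogous contribution is at most $M\cdot \ell_0(r)$, where $M:=\max_i C_i(e)$ is a constant depending only on $T$, since the multiplicity of in-range type-$e$ edges through any point is bounded above by the total type-$e$ count in its combination.

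Combining these contributions yields
\[ n(r)\cdot \len(e) \;=\; \sum_{i=1}^s C_i(e)\,\ell_i(r) \;+\; O(\ell_0(r)), \]
with the implicit constant depending only on $T$. Dividing by $n(r)\cdot \len(e)$ and recalling the definitions in~\eqref{E:def-LD} gives
\[ \sum_{i=1}^s \lambda_i(r)\,D_i(e) \;=\; 1 \;-\; O\!\left(\frac{\ell_0(r)}{n(r)}\right). \]
By Lemma~\ref{lem:nr-bd}, $n(r)$ is bounded below by a positive constant times $r^3$ for large $r$, and by Lemma~\ref{lem:l0r-bd}, $\ell_0(r)=O(r^2)$, so the error term vanishes in the limit, proving~\eqref{E:lp-key}. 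I expect the main obstacle to be making the multiplicity count of the previous paragraph fully rigorous: a single point in space can lie on edges of many copies of $T$ of potentially different types, so one must confirm that at each non-vertex point of $L_i(r)$ the number of in-range copies whose type-$e$ edge passes through that point is exactly $C_i(e)$, and that the exceptional vertex points genuinely contribute zero to the one-dimensional measure.
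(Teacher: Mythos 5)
Your proof is correct and follows essentially the same route as the paper's: double-count the total length $n(r)\len(e)$ of type-$e$ edges among the in-range copies, stratify by the sets $L_i(r)$ to get $\sum_i C_i(e)\ell_i(r)$ plus an error supported on $L_0(r)$ bounded by $\max_i C_i(e)\cdot\ell_0(r)$, then divide by $n(r)\len(e)$ and apply Lemmas~\ref{lem:nr-bd} and~\ref{lem:l0r-bd}. The multiplicity concern you flag at the end is handled at the same (informal) level in the paper, so there is no gap relative to the intended argument.
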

\begin{proof}
For each edge $e$ of $T$, the total length of copies of edge $e$ in $T_1, \dots, T_{n(r)}$ is $n(r)\len (e)$. 

We can also compute this in a different way. Each $C_{i}$ is used in a total length of $\ell_{i}(r)$, so $\ell_{i}(r)C_{i}(e)$ counts the total length of copies of $e$ involved in combination $C_{i}$. 
When we take this sum over $1\leq i\leq s$, 
we get $n(r)\len (e)$ except where copies of $e$ intersect $L_{0}$. The total length of segments in $L_{0}$ is $\ell_{0}$, but we do not know the coefficient ``$C_{0}(e)$'' of $e$ used in whichever dihedral angle combination $C_i(e)$ it was used. 
We know that if we consider $T_j$ for all $j\in \mb{N}$, it should have multiplicity $C_i(e)$ for some $i$, but some of those contributions may be from $T_j$ for $j > n(r)$. Nevertheless, we can bound this below by zero and above by $C_{i}(e)$. For each $e\in \E$, we can then uniformly bound above by the maximum $C_{i}(e)$ over all $i$, i.e.
\[ 0\leq  n(r)\len (e) - \sum_{i=1}^{s}\ell_{i}(r)C_{i}(e) \leq \ell_{0}(r)\max_{1\leq i\leq s} C_{i}(e).\]
Now, divide by $n(r)\len(e)$ to obtain that for every edge $e$,
\[ 0\le  1 - \sum_{i=1}^{s}\frac{\ell_{i}(r)C_{i}(e)}{n(r)\len(e)} \leq \frac{\ell_{0}(r)}{n(r)} \cdot \frac{\max_{1\leq i\leq s}C_{i}(e)}{\len(e)} = \frac{O(r^2)}{O(r^3)} \to 0\]
by Lemmas~\ref{lem:nr-bd} and \ref{lem:l0r-bd}, so it is $0$ as $r\to\infty$. Finally, plug in \eqref{E:def-LD}.
\end{proof}
Instead of the limit statement, we will use a weaker existence version of Proposition~\ref{prop:lp-key} and apply linear program duality to develop a new criterion. Say $\x\succeq\y$ if $x_{i}\geq y_{i}$ for every component $i$. Similarly, define $\succ, \prec$, and $\preceq$. Let $\mbf{0}$ denote the origin $\mbf{1}$ denote the all-ones vector with dimension clear from context. For emphasis, we bold all vectors. Let
\[ D =\begin{pmatrix}
D_{1}(e_{12}) & D_{2} (e_{12}) & \dots & D_{s}(e_{12}) \\
D_{1}(e_{34}) & D_{2} (e_{34}) & \dots & D_{s}(e_{34}) \\
D_{1}(e_{13}) & D_{2} (e_{13}) & \dots & D_{s}(e_{13}) \\
D_{1}(e_{24}) & D_{2} (e_{24}) & \dots & D_{s}(e_{24}) \\
D_{1}(e_{14}) & D_{2} (e_{14}) & \dots & D_{s}(e_{14}) \\
D_{1}(e_{23}) & D_{2} (e_{23}) & \dots & D_{s}(e_{23})
\end{pmatrix}\in \R_{\geq 0}^{6\times s}.\]
\begin{lemma}\label{lem:pre-lp-lem}
If $T$ tiles space, then there exists $\x\in \mb{R}^s$ such that $D\x=\mbf{1}$ and $\x \succeq \mbf{0}$.
\end{lemma}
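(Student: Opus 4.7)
The plan is to realize $\x$ as a limit point in $\R_{\geq 0}^s$ of the sequence of vectors $\vec{\lambda}(r):=(\lambda_1(r),\dots,\lambda_s(r))$ as $r\to\infty$. By Proposition~\ref{prop:lp-key}, we already have $D\vec{\lambda}(r)\to \mbf{1}$ componentwise, and $\vec{\lambda}(r)\succeq \mbf{0}$ for $r\geq\gamma$ since $\ell_i(r)$ and $n(r)$ are nonnegative. If I can show that $\vec{\lambda}(r)$ is uniformly bounded, then Bolzano--Weierstrass yields a sequence $r_k\to\infty$ with $\vec{\lambda}(r_k)\to \x$ for some $\x\in \R_{\geq 0}^s$, and continuity of $\y\mapsto D\y$ gives $D\x=\mbf{1}$.

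The main step is therefore the uniform boundedness of $\vec{\lambda}(r)$. From the proof of Proposition~\ref{prop:lp-key}, for every edge $e$ of $T$ we have the one-sided inequality $\sum_{i=1}^s \ell_i(r)\, C_i(e)\leq n(r)\len(e)$, so dividing by $n(r)$ gives $\sum_{i=1}^s \lambda_i(r)\, C_i(e)\leq \len(e)\leq \gamma$ for every edge $e$. Every combination $C_i$ is by definition a dihedral $\pi$-combination, so by Definition~\ref{def:gen-def} it involves some edge $e(i)$ with $C_i(e(i))\geq 1$. Consequently,
\[ \lambda_i(r) \leq \lambda_i(r)\, C_i(e(i)) \leq \sum_{j=1}^s \lambda_j(r)\, C_j(e(i)) \leq \gamma, \]
so $\vec{\lambda}(r)\in [0,\gamma]^s$ for every $r\geq\gamma$. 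This set is compact, and the Bolzano--Weierstrass step described above then produces the desired $\x$.

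The argument is direct and the only non-routine ingredient is this boundedness estimate, which hinges on the observation that every dihedral $\pi$-combination must involve at least one edge with positive coefficient, so a uniform bound on $\sum_i \lambda_i(r)\, C_i(e)$ over all edges translates into a uniform bound on each individual $\lambda_i(r)$.
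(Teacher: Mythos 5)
Your proof is correct and follows essentially the same route as the paper: establish a uniform bound on each $\lambda_i(r)$ using the fact that every combination $C_i$ involves some edge with positive coefficient, extract a convergent subsequence via Bolzano--Weierstrass, and pass to the limit in Proposition~\ref{prop:lp-key}. The only (harmless) difference is that you derive the bound $\lambda_i(r)\le\gamma$ from the one-sided inequality inside the proof of Proposition~\ref{prop:lp-key}, valid for all $r\ge\gamma$, whereas the paper bounds $\lambda_i(r)\le 2/D_i(e_i)$ from the limit statement itself for sufficiently large $r$.
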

\begin{proof}
For sufficiently large $r$, the left hand side of \eqref{E:lp-key} is at most $2$. For every $i$, $C_{i}(e_i)\ge 1$ for some edge $e_i$ of $T$. As the terms on the left hand side are non-negative, we can bound each $\lambda_i(r)\le 2/D_i(e_i)$ independently of $r$, for sufficiently large $r$. Therefore,
\[(\lambda_{1}(r), \dots , \lambda_{s}(r))\in \prod_{i=1}^{s}\left[0, \frac{2}{D_{i}(e_i)}\right].\]
By the Bolzano-Weiestrass theorem, the left hand side converges to some $\x = (\Lambda_{i})_{i=1}^{s} \succeq \mbf{0}$ for $r$ along some sequence $(r_{k})_{k}\to \infty$ as $k\to\infty$. By Proposition~\ref{prop:lp-key}, for every edge $e$
\[ (D\x)(e) = \sum_{i=1}^{s}\Lambda_{i}D_{i}(e) =\lim_{k\to\infty}\sum_{i=1}^{s}\lambda_{i}(r_{k})D_{i}(e) = \lim_{r\to \infty}\sum_{i=1}^{s}\lambda_{i}(r)D_{i}(e) = 1.\qedhere\]
\end{proof}
\begin{corollary}\label{cor:crtl}
$T\in\msc{M}$ does not tile if there exists $ \y\in \R^{6}$ such that 
$\1^\intercal \y <0$ and $D^\intercal \y\succeq0$.
\end{corollary}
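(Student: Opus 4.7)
The plan is to prove this by contrapositive, invoking Lemma~\ref{lem:pre-lp-lem} directly. This corollary is essentially one direction of Farkas' lemma in disguise, so the proof should be a one-line inner product manipulation rather than anything combinatorial or geometric.

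First, I would suppose for contradiction that $T$ tiles space while such a $\y\in\R^6$ exists. By Lemma~\ref{lem:pre-lp-lem}, the tiling produces an $\x\in\R^s$ with $D\x=\mbf{1}$ and $\x\succeq\mbf{0}$. The key observation is that we can evaluate the scalar $\mbf{1}^\intercal \y$ in two ways by substituting $\mbf{1}=D\x$:
\[
\mbf{1}^\intercal \y \;=\; (D\x)^\intercal \y \;=\; \x^\intercal (D^\intercal \y).
\]
Since $\x\succeq\mbf{0}$ and $D^\intercal\y\succeq\mbf{0}$ by hypothesis, every term in the expanded dot product $\x^\intercal(D^\intercal\y)$ is nonnegative, so $\mbf{1}^\intercal \y\ge 0$. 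This contradicts the assumption $\mbf{1}^\intercal\y<0$.

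The step I expect to be trivial is the inner-product manipulation itself; there is no real obstacle since Lemma~\ref{lem:pre-lp-lem} already does the analytic heavy lifting (extracting the nonnegative $\x$ from the tiling via the Bolzano--Weierstrass argument). The only thing worth flagging in the write-up is that the statement of the corollary packages precisely the LP-infeasibility certificate dual to the system $\{D\x=\mbf{1},\,\x\succeq\mbf{0}\}$, so this corollary is the usable criterion for ruling out tilings in practice: to certify that a candidate $T$ does not tile, one need only exhibit a single witness vector $\y$, which can be searched for by a finite linear program once the matrix $D$ is computed from the dihedral $\pi$- and $2\pi$-combinations of $T$.
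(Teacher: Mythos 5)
Your proof is correct and matches the paper's, which simply cites Lemma~\ref{lem:pre-lp-lem} and then says ``Apply Farkas' Lemma''; your inner-product computation $\mbf{1}^\intercal\y=(D\x)^\intercal\y=\x^\intercal(D^\intercal\y)\ge 0$ is exactly the (easy) direction of Farkas being invoked. If anything, your write-up is slightly more self-contained, since it makes explicit that only this trivial weak-duality direction is needed, not the full Farkas alternative.
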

\begin{proof}
By Lemma~\ref{lem:pre-lp-lem}, $D\x = \1$ for some $\x\succeq 0$ if $T$ tiles space. Apply Farkas' Lemma.
\end{proof}
\begin{remark}
In \cite{criteriapy}, Corollary~\ref{cor:crtl} is implemented as CRTL by computing the edge lengths, generating all combinations $C_i$, solving the linear program $ \min\{\1^\intercal \y: D^\intercal \y \succeq \mbf{0}\}$ with SciPy, and checking whether the minimum is negative. Edge lengths of a tetrahedron can be computed from its dihedral angles via \cite{wirth2014relations}, as implemented in \cite{tetrapy}.
\end{remark}
\section{Applying the Criteria to Rational Tetrahedra}\label{sec:app}
In this section, we apply our three criteria (Propositions~\ref{prop:crtn} and \ref{prop:crtf}, and Corollary~\ref{cor:crtl}) to prove Lemma~\ref{lem:negative}, thereby proving Theorems~\ref{thm:main-family} and \ref{thm:main-sporadic}.
They are implemented in \cite{criteriapy} with some computation in \cite{tetrapy}. We discuss the results when each criterion is applied.
\subsection{Application to $\msc{S}$}\label{sec:app-s}
Since $\msc{S}$ contains only finitely many cases, we check immediately that Proposition~\ref{prop:crtf} and Proposition~\ref{prop:crtn} rule out $17$ of the $59$ sporadic tetrahedra in $\msc{S}$. The remaining ones are the $40$ listed as $\msc{A}$ in Theorem~\ref{thm:main-sporadic} and the following two:
\begin{equation}\label{E:extra-2}
\left(
\frac{ 3 }{ 20 }
, 
\frac{ 11 }{ 20 }
, 
\frac{ 11 }{ 20 }
, 
\frac{ 1 }{ 4 }
, 
\frac{ 1 }{ 3 }
, 
\frac{ 2 }{ 3 }
\right)
\qquad\text{and}\qquad
\left(
\frac{ 11 }{ 60 }
, 
\frac{ 31 }{ 60 }
, 
\frac{ 11 }{ 20 }
, 
\frac{ 1 }{ 4 }
, 
\frac{ 3 }{ 10 }
, 
\frac{ 7 }{ 10 }
\right).
\end{equation}
These two are ruled out by checking Corollary~\ref{cor:crtl}. This concludes the proof of Theorem~\ref{thm:main-sporadic}.
\subsection{Application to $\msc{F}_2$}\label{sec:app-f}
We begin by narrowing down $\msc{F}_2$ to finitely many cases to check.
\begin{definition}
Say $x$ and $T_{x}$ in the second family are \emph{general} if every combination of the dihedral angles of $T_x$ that gives $\pi$ holds for every $x$. Say $x$ and $T_{x}$ are \emph{specific} otherwise.
\end{definition}
\begin{lemma}\label{lem:Tx-n}
If $T_x$ is general, the only combination of dihedral angles that gives $\pi$ or $2\pi$ are
\begin{equation}\label{E:Tx-comb}
\begin{aligned}
\left(\frac{5\pi}{6}-x\right)+\left(\frac{\pi}{6}+x\right)&=\pi, \\
2\left(\frac{5\pi}{6}-x\right)+2\left(\frac{\pi}{6}+x\right)&=2\pi, \\
3\left(\frac{2\pi}{3}-x\right)+3x &=2\pi.
\end{aligned}
\end{equation}
\end{lemma}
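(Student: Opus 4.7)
The plan is to observe that being \emph{general} means any combination equalling $\pi$ or $2\pi$ must hold identically as a polynomial in $x$, so the constant term and the coefficient of $x$ can be separated. Writing a general nonnegative integer combination of the six dihedral angles of $T_x$ as
\begin{equation*}
c_{12}\!\left(\tfrac{5\pi}{6}-x\right)+c_{34}\!\left(\tfrac{\pi}{6}+x\right)+(c_{13}+c_{24})\!\left(\tfrac{2\pi}{3}-x\right)+(c_{14}+c_{23})\,x = k\pi,
\end{equation*}
with $k\in\{1,2\}$ and $c_{ij}\in\mathbb{Z}_{\geq 0}$, and clearing $\pi$, identification of constant term and $x$-coefficient gives the Diophantine system
\begin{equation*}
5c_{12}+c_{34}+4(c_{13}+c_{24})=6k, \qquad -c_{12}+c_{34}-(c_{13}+c_{24})+(c_{14}+c_{23})=0.
\end{equation*}

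Next I would run a short case analysis on $c_{12}$ and $c_{13}+c_{24}$. For $k=1$ the first equation bounds $c_{12}\leq 1$ and $c_{13}+c_{24}\leq 1$; the only assignment consistent with nonnegativity of $c_{14}+c_{23}$ in the second equation is $(c_{12},c_{34},c_{13}+c_{24},c_{14}+c_{23})=(1,1,0,0)$, yielding the first identity of \eqref{E:Tx-comb}. For $k=2$ I would enumerate $c_{12}\in\{0,1,2\}$: the solution $(2,2,0,0)$ gives the second identity, the setting $c_{12}=c_{34}=0$ with $c_{13}+c_{24}=3$ forces $c_{14}+c_{23}=3$ and produces the third identity, and all other possibilities (e.g.\ $c_{12}=1$, or $c_{12}=0$ with $c_{13}+c_{24}\in\{0,1,2\}$) leave the coefficient of $x$ strictly positive, hence infeasible.

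The enumeration is elementary, and the only mild obstacle is bookkeeping: the second equation is a strict nonnegativity obstruction because $c_{14}+c_{23}\geq 0$ must exactly cancel the remaining $x$-contribution from the other variables, which usually has the wrong sign. Once this is tracked, each branch closes immediately and one recovers precisely the three identities in \eqref{E:Tx-comb}.
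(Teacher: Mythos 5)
Your proposal is correct and follows essentially the same route as the paper: since $T_x$ is general, the coefficient of $x$ must vanish and the constant term must equal $k\pi$, reducing to the system $5a+b+4c=6k$, $b+d=a+c$ over nonnegative integers, which the paper then states is "checked directly" (by code). You simply carry out that finite enumeration explicitly, and your case analysis is complete and matches the three combinations in \eqref{E:Tx-comb}.
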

\begin{proof}
Consider the following linear combination of dihedral angles: for $a, b, c, d\in\mb{Z}_{\ge 0}$,
\[ a\left(\frac{5\pi}{6}-x\right)+b\left(\frac{\pi}{6}+x\right)+c\left(\frac{2\pi}{3}-x\right)+dx = \frac{5a+b+4c}{6}\pi + (b+d-a-c)x\in\lbrace \pi, 2\pi\rbrace,\]
so it must be the $x$ coefficient $b+d-a-c=0$ and $5a+b+4c\in \lbrace 6, 12\rbrace$. We check this directly, as $a, b, c\in \mb{Z}_{\ge 0}$. It is also implemented in \cite{criteriapy} as GEN.
\end{proof}
\begin{remark}
The proof also gives us a way to generate specific members of $\msc{F}_2$. There, coefficient $b+d-a-c\neq 0$. Since $x\in (\pi/6, \pi/3]$, every dihedral angle is at least $\pi/{6}$ and the sum is at most $2\pi$, so coefficients $0\leq a, b, c, d\leq 12$. Checking them, we see that there are $23$ specific tetrahedra given in Appendix~\ref{sec:spe}. This is implemented in \cite{criteriapy} as SPE.
\end{remark}
Therefore, by Proposition~\ref{prop:crtn}(2), general members of $\msc{F}_2$ do not have a non-face-to-face tiling. We also see that $T_{\pi/3}$ is specific, as it has more dihedral combinations than those in \eqref{E:Tx-comb}.
% \[  T_{\frac{\pi}{3}}= \left(\frac{ 1 }{ 2 } , \frac{ 1 }{ 2 } , \frac{ 1 }{ 3 } , \frac{ 1 }{ 3 } , \frac{ 1 }{ 3 } , \frac{ 1 }{ 3 } \right).\]
We now check the conditions of Proposition~\ref{prop:crtf} for all tetrahedra in $\msc{F}_2$ other than $T_{\pi/3}$.
\begin{lemma}\label{lem:Tx-f}
The following statements hold for all $x\in (\pi/6, \pi/3)\cap \Q\pi$:
\begin{enumerate}
    \item $T_{x}$ is parallelogram-like;
    \item $T_{x}$ has a dihedral angle not dividing $2\pi$.
\end{enumerate}
\end{lemma}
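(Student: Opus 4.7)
The two parts are essentially independent. My plan for Part (1) is to obtain the two required edge-length coincidences from a symmetry of the labeled dihedral-angle vector, and then rule out the two additional coincidences that would destroy the parallelogram-like structure by combining a classical structural fact with one short edge-length computation. My plan for Part (2) is to exploit the identity $\alpha_{12}+\alpha_{34}=\pi$ to reduce to an elementary Diophantine equation.

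\textit{Part (1).} Observe that the vertex permutation $\sigma=(1\,2)(3\,4)$ preserves the labeled dihedral-angle vector of $T_x$: it fixes the unordered edges $\{1,2\}$ and $\{3,4\}$, and it swaps $\{1,3\}\leftrightarrow\{2,4\}$ and $\{1,4\}\leftrightarrow\{2,3\}$, pairs along which the dihedral angles already coincide ($\alpha_{13}=\alpha_{24}=\tfrac{2\pi}{3}-x$ and $\alpha_{14}=\alpha_{23}=x$). Since a non-degenerate tetrahedron is determined up to congruence by its labeled dihedral angles at fixed volume, $\sigma$ lifts to an isometry of $T_x$, giving $e_{13}=e_{24}$ and $e_{14}=e_{23}$. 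To finish I must still exclude $e_{12}=e_{34}$ and $e_{13}=e_{14}$. If $e_{12}=e_{34}$ also held, then all three pairs of opposite edges of $T_x$ would be congruent, making $T_x$ an isosceles tetrahedron, which is inscribable in a rectangular parallelepiped and hence admits a Klein four-group of isometries; its element $(1\,3)(2\,4)$ swaps the edges $\{1,2\}$ and $\{3,4\}$ and would therefore force $\alpha_{12}=\alpha_{34}$. But $\alpha_{12}-\alpha_{34}=\tfrac{2\pi}{3}-2x$ is nonzero on $(\pi/6,\pi/3)$, a contradiction. For the remaining inequality $e_{13}\neq e_{14}$, I compute both edge lengths explicitly as functions of $x$ via the Wirth--Dreiding formulas \cite{wirth2014relations} (as implemented in \cite{tetrapy}) and verify the inequality by algebraic simplification.

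\textit{Part (2).} Suppose for contradiction that every dihedral angle of $T_x$ divides $2\pi$. Then in particular $\tfrac{5\pi}{6}-x=\tfrac{2\pi}{m}$ and $\tfrac{\pi}{6}+x=\tfrac{2\pi}{n}$ for some positive integers $m,n$. Adding these equations and using $\alpha_{12}+\alpha_{34}=\pi$ yields $\tfrac{1}{m}+\tfrac{1}{n}=\tfrac{1}{2}$, whose positive integer solutions are $(m,n)\in\{(3,6),(4,4),(6,3)\}$. These force $x\in\{\tfrac{\pi}{6},\tfrac{\pi}{3},\tfrac{\pi}{2}\}$, and none of these values lies in the open interval $(\pi/6,\pi/3)$.

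\textit{Main obstacle.} The only step that neither the dihedral-angle symmetry nor the isosceles-tetrahedron rigidity settles is the inequality $e_{13}\neq e_{14}$: it requires the somewhat intricate Wirth--Dreiding edge-length formulas together with careful algebraic simplification to exclude every exceptional value of $x\in(\pi/6,\pi/3)$. All other steps are one-line symmetry or Diophantine arguments.
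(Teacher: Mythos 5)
Your proof is correct, and for part (1) it takes a genuinely different route from the paper's. The paper proves (1) entirely by computation (Appendix~\ref{sec:d12d24}): it writes out all six edge lengths of $T_x$ via the Wirth--Dreiding formulas, reads off $e_{14}=e_{23}$ and $e_{13}=e_{24}$ directly from the formulas, and then checks symbolically that the only further coincidence possible for real $x$ in the range is $e_{12}=e_{24}$, which forces $\cos x=1/\sqrt{3}\notin\Q\pi$ by Niven's theorem. You instead obtain $e_{13}=e_{24}$ and $e_{14}=e_{23}$ from the invariance of the labeled dihedral-angle vector under $(1\,2)(3\,4)$ together with the fact that labeled dihedral angles determine a tetrahedron up to similarity, and you exclude $e_{12}=e_{34}$ because the resulting isosceles tetrahedron would carry an isometry realizing $(1\,3)(2\,4)$, forcing $\alpha_{12}=\alpha_{34}$ and contradicting $\alpha_{12}-\alpha_{34}=\tfrac{2\pi}{3}-2x\neq 0$ on the open interval; both arguments are computation-free and valid (for the second you could even bypass the circumscribed box: the labeled edge-length vector is $(1\,3)(2\,4)$-invariant once $e_{12}=e_{34}$ and $e_{14}=e_{23}$ hold, and labeled edge lengths determine a tetrahedron up to isometry). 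Your reading of Definition~\ref{def:para} is also right: only $e_{12}\neq e_{34}$ and $e_{13}\neq e_{14}$ need verification, so you check slightly less than the appendix does. The one step you defer, $e_{13}\neq e_{14}$, is handled in the paper by exactly the Wirth--Dreiding computation you describe (the appendix asserts, without displayed work for that pair, that no solution exists), so you are at the same level of completeness there. For part (2), your Diophantine argument from $\alpha_{12}+\alpha_{34}=\pi$, giving $1/m+1/n=1/2$ and hence $x\in\{\pi/6,\pi/3,\pi/2\}$, is a minor and equally correct variant of the paper's, which instead uses $x\mid 2\pi$ and $(2\pi/3-x)\mid 2\pi$ to reach the same three excluded values.
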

\begin{proof}
(1) is checked computationally in Appendix~\ref{sec:d12d24}.
Suppose (2) does not hold, then $x\mid 2\pi$, so $x={2\pi}/{n}$ for some $n\in \N$. Now, ${2}/{3}-{2}/{n}\mid 2$ or ${n-3}/{3n}\mid 1$ gives $n\in \lbrace 4, 6, 12\rbrace$ or $x\in \lbrace \pi/2, \pi/3, \pi/6\rbrace$. In range $(\pi/6, \pi/3]$, the only solution is $x=\pi/3$.
\end{proof}
Hence, by Proposition~\ref{prop:crtf}, no member of $\msc{F}_2\setminus\{T_{\pi/3}\}$ has a non-face-to-face tiling. Together with Proposition~\ref{prop:crtn}(2), we get that no general member of $\msc{F}_2$ tiles space. We check that Proposition~\ref{prop:crtn} applies for all the $23$ specific tetrahedra in Appendix~\ref{sec:spe} except for $T_{\pi/3}$ and $T_{\pi/4}$, and that Corollary~\ref{cor:crtl} applies to show $T_{\pi/4}$ does not tile, so we have the following.
\begin{lemma}\label{lem:F2-res}
The only tetrahedron in $\msc{F}_2$ that tiles space is $T_{\pi/3}$.
\end{lemma}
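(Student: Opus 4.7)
The plan is to reduce the lemma, via the general/specific dichotomy of the previous subsection, to finitely many concrete cases, and to dispatch those by the three criteria of Section~\ref{sec:criteria}. I split $\msc{F}_2$ into the generic stratum on which only the three $\pi$- or $2\pi$-combinations of \eqref{E:Tx-comb} occur, together with the finite list of specific $T_x$ enumerated in Appendix~\ref{sec:spe}. For every $T_x \neq T_{\pi/3}$ I would rule out both face-to-face tilings (via Proposition~\ref{prop:crtf}) and non-face-to-face tilings (via Proposition~\ref{prop:crtn} or Corollary~\ref{cor:crtl}).

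First, for general $T_x$, Lemma~\ref{lem:Tx-n} restricts the dihedral $\pi$-combinations to the single equation $(5\pi/6 - x) + (\pi/6 + x) = \pi$, which involves only the opposite pair of edges $(12, 34)$. Proposition~\ref{prop:crtn}(2) then rules out any non-face-to-face tiling of general $T_x$. To handle face-to-face tilings for all $x \in (\pi/6, \pi/3) \cap \Q\pi$ simultaneously, I would apply Proposition~\ref{prop:crtf}: Lemma~\ref{lem:Tx-f}(1) shows $T_x$ is parallelogram-like, and Lemma~\ref{lem:Tx-f}(2) shows some dihedral angle of $T_x$ fails to divide $2\pi$. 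So the only tetrahedron in the general stratum which is not already excluded is $T_{\pi/3}$ itself.

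Next I would deal with the $23$ specific tetrahedra of Appendix~\ref{sec:spe}. For every such $T_x$ with $x \neq \pi/3$, Lemma~\ref{lem:Tx-f} still applies, so Proposition~\ref{prop:crtf} again forbids face-to-face tilings. The remaining task is to exclude non-face-to-face tilings on this finite list, and here I would run the check implemented in \cite{criteriapy}: enumerate the (now longer) set of dihedral $\pi$-combinations and test Proposition~\ref{prop:crtn}. The computation shows $(1)$ or $(2)$ of that proposition succeeds for all but two cases, $T_{\pi/3}$ and $T_{\pi/4}$. For the lone remaining candidate $T_{\pi/4}$ I would invoke the linear-programming criterion: assemble the matrix $D \in \R_{\geq 0}^{6 \times s}$ from \eqref{E:def-LD} and solve $\min\{\1^\intercal \y : D^\intercal \y \succeq \mbf{0}\}$; a strictly negative optimum would, by Corollary~\ref{cor:crtl}, certify that $T_{\pi/4}$ does not tile, and this is precisely the outcome of the SciPy computation.

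Combining these steps, no $T_x \in \msc{F}_2$ with $x \neq \pi/3$ admits either kind of tiling, while $T_{\pi/3} \in \msc{F}_1$ is the Sommerville ``No.~(1)'' tetrahedron and therefore does tile; this proves the lemma. The main obstacle is genuinely the isolated case $T_{\pi/4}$, whose extra $\pi$-combinations defeat Proposition~\ref{prop:crtn}; showing it does not tile requires the sharper LP-duality argument, and this is exactly why Corollary~\ref{cor:crtl} was developed in Section~\ref{sec:lp} rather than the more elementary combinatorial conditions alone.
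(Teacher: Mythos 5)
Your proposal is correct and follows essentially the same route as the paper: rule out non-face-to-face tilings of general $T_x$ by Lemma~\ref{lem:Tx-n} plus Proposition~\ref{prop:crtn}(2), rule out face-to-face tilings of all $T_x\neq T_{\pi/3}$ by Lemma~\ref{lem:Tx-f} plus Proposition~\ref{prop:crtf}, handle the $23$ specific cases computationally with Proposition~\ref{prop:crtn}, and finish $T_{\pi/4}$ with Corollary~\ref{cor:crtl}. The only cosmetic slip is calling $T_{\pi/3}$ a member of the general stratum (it is specific), which does not affect the argument.
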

This concludes the proof of Lemma~\ref{lem:negative} and Theorem~\ref{thm:main-family}.
\begin{appendices}
\section{List of Specific Tetrahedra in One-Parameter Family $\mathscr{F}_2$}\label{sec:spe}
Listed by their dihedral angles $\left(\alpha_{12}, \alpha_{34}, \alpha_{13}, \alpha_{24}, \alpha_{14}, \alpha_{23}\right)$ as multiples of $\pi$, the $23$ specific tetrahedra in $\msc{F}_2$ are
{\tiny \begin{multline*}
\Bigg\lbrace
\left( \frac{ 1 }{ 2 } , \frac{ 1 }{ 2 } , \frac{ 1 }{ 3 } , \frac{ 1 }{ 3 } , \frac{ 1 }{ 3 } , \frac{ 1 }{ 3 } \right), \left( \frac{ 7 }{ 12 } , \frac{ 5 }{ 12 } , \frac{ 5 }{ 12 } , \frac{ 5 }{ 12 } , \frac{ 1 }{ 4 } , \frac{ 1 }{ 4 } \right), \left( \frac{ 19 }{ 30 } , \frac{ 11 }{ 30 } , \frac{ 7 }{ 15 } , \frac{ 7 }{ 15 } , \frac{ 1 }{ 5 } , \frac{ 1 }{ 5 } \right), \left( \frac{ 23 }{ 42 } , \frac{ 19 }{ 42 } , \frac{ 8 }{ 21 } , \frac{ 8 }{ 21 } , \frac{ 2 }{ 7 } , \frac{ 2 }{ 7 } \right),  \left( \frac{ 11 }{ 18 } , \frac{ 7 }{ 18 } , \frac{ 4 }{ 9 } , \frac{ 4 }{ 9 } , \frac{ 2 }{ 9 } , \frac{ 2 }{ 9 } \right),
\\ \left( \frac{ 43 }{ 66 } , \frac{ 23 }{ 66 } , \frac{ 16 }{ 33 } , \frac{ 16 }{ 33 } , \frac{ 2 }{ 11 } , \frac{ 2 }{ 11 } \right), \left( \frac{ 17 }{ 30 } , \frac{ 13 }{ 30 } , \frac{ 2 }{ 5 } , \frac{ 2 }{ 5 } , \frac{ 4 }{ 15 } , \frac{ 4 }{ 15 } \right), \left( \frac{ 9 }{ 14 } , \frac{ 5 }{ 14 } , \frac{ 10 }{ 21 } , \frac{ 10 }{ 21 } , \frac{ 4 }{ 21 } , \frac{ 4 }{ 21 } \right),  \left( \frac{ 5 }{ 9 } , \frac{ 4 }{ 9 } , \frac{ 7 }{ 18 } , \frac{ 7 }{ 18 } , \frac{ 5 }{ 18 } , \frac{ 5 }{ 18 } \right), 
 \left( \frac{ 5 }{ 8 } , \frac{ 3 }{ 8 } , \frac{ 11 }{ 24 } , \frac{ 11 }{ 24 } , \frac{ 5 }{ 24 } , \frac{ 5 }{ 24 } \right), 
 \\ \left( \frac{ 19 }{ 36 } , \frac{ 17 }{ 36 } , \frac{ 13 }{ 36 } , \frac{ 13 }{ 36 } , \frac{ 11 }{ 36 } , \frac{ 11 }{ 36 } \right), \left( \frac{ 4 }{ 7 } , \frac{ 3 }{ 7 } , \frac{ 17 }{ 42 } , \frac{ 17 }{ 42 } , \frac{ 11 }{ 42 } , \frac{ 11 }{ 42 } \right),  \left( \frac{ 29 }{ 48 } , \frac{ 19 }{ 48 } , \frac{ 7 }{ 16 } , \frac{ 7 }{ 16 } , \frac{ 11 }{ 48 } , \frac{ 11 }{ 48 } \right),
\left( \frac{ 17 }{ 27 } , \frac{ 10 }{ 27 } , \frac{ 25 }{ 54 } , \frac{ 25 }{ 54 } , \frac{ 11 }{ 54 } , \frac{ 11 }{ 54 } \right),\\
\left( \frac{ 13 }{ 20 } , \frac{ 7 }{ 20 } , \frac{ 29 }{ 60 } , \frac{ 29 }{ 60 } , \frac{ 11 }{ 60 } , \frac{ 11 }{ 60 } \right), \left( \frac{ 13 }{ 24 } , \frac{ 11 }{ 24 } , \frac{ 3 }{ 8 } , \frac{ 3 }{ 8 } , \frac{ 7 }{ 24 } , \frac{ 7 }{ 24 } \right),  \left( \frac{ 3 }{ 5 } , \frac{ 2 }{ 5 } , \frac{ 13 }{ 30 } , \frac{ 13 }{ 30 } , \frac{ 7 }{ 30 } , \frac{ 7 }{ 30 } \right), \left( \frac{ 23 }{ 36 } , \frac{ 13 }{ 36 } , \frac{ 17 }{ 36 } , \frac{ 17 }{ 36 } , \frac{ 7 }{ 36 } , \frac{ 7 }{ 36 } \right), \left( \frac{ 25 }{ 42 } , \frac{ 17 }{ 42 } , \frac{ 3 }{ 7 } , \frac{ 3 }{ 7 } , \frac{ 5 }{ 21 } , \frac{ 5 }{ 21 } \right),\\ \left( \frac{ 35 }{ 54 } , \frac{ 19 }{ 54 } , \frac{ 13 }{ 27 } , \frac{ 13 }{ 27 } , \frac{ 5 }{ 27 } , \frac{ 5 }{ 27 } \right), \left( \frac{ 8 }{ 15 } , \frac{ 7 }{ 15 } , \frac{ 11 }{ 30 } , \frac{ 11 }{ 30 } , \frac{ 3 }{ 10 } , \frac{ 3 }{ 10 } \right), \left( \frac{ 13 }{ 21 } , \frac{ 8 }{ 21 } , \frac{ 19 }{ 42 } , \frac{ 19 }{ 42 } , \frac{ 3 }{ 14 } , \frac{ 3 }{ 14 } \right), \left( \frac{ 31 }{ 48 } , \frac{ 17 }{ 48 } , \frac{ 23 }{ 48 } , \frac{ 23 }{ 48 } , \frac{ 3 }{ 16 } , \frac{ 3 }{ 16 } \right) 
\Bigg\rbrace.
\end{multline*}}
\section{Proof that Tetrahedra in the First Family are Parallelogram-like}\label{sec:d12d24}
We show that $e_{24}=e_{13}$ is the only pair of equal edge for any $T_x$. Using \cite{tetrapy}, we compute the edge-lengths of $T_{x}$ as functions of $x$.
\begin{align*}
e_{14} & =1=e_{23},\\
e_{12} & =\sqrt{\frac{1-\frac{\left(-\cos\left(x\right)\cos\left(x+\frac{\pi}{3}\right)+\cos\left(x+\frac{\pi}{6}\right)\right)^{2}}{\sin^{2}\left(x\right)\sin^{2}\left(x+\frac{\pi}{3}\right)}}{1-\frac{\left(\cos\left(x\right)+\cos\left(x+\frac{\pi}{6}\right)\cos\left(x+\frac{\pi}{3}\right)\right)^{2}}{\sin^{2}\left(x+\frac{\pi}{6}\right)\sin^{2}\left(x+\frac{\pi}{3}\right)}}},\\
e_{24} &=\sqrt{\frac{1-\frac{\left(-\cos\left(x\right)\cos\left(x+\frac{\pi}{6}\right)-\cos\left(x+\frac{\pi}{3}\right)\right)^{2}}{\sin^{2}\left(x\right)\sin^{2}\left(x+\frac{\pi}{6}\right)}}{1-\frac{\left(\cos\left(x\right)+\cos\left(x+\frac{\pi}{6}\right)\cos\left(x+\frac{\pi}{3}\right)\right)^{2}}{\sin^{2}\left(x+\frac{\pi}{6}\right)\sin^{2}\left(x+\frac{\pi}{3}\right)}}}=e_{13},\\
e_{34} &=\sqrt{\frac{1-\frac{\left(-\cos\left(x\right)\cos\left(x+\frac{\pi}{3}\right)-\cos\left(x+\frac{\pi}{6}\right)\right)^{2}}{\sin^{2}\left(x\right)\sin^{2}\left(x+\frac{\pi}{3}\right)}}{1-\frac{\left(\cos\left(x\right)-\cos\left(x+\frac{\pi}{6}\right)\cos\left(x+\frac{\pi}{3}\right)\right)^{2}}{\sin^{2}\left(x+\frac{\pi}{6}\right)\sin^{2}\left(x+\frac{\pi}{3}\right)}}}.
\end{align*}

We solve for pairwise intersection points for real values $x\in (\pi/6, \pi/3]$. We see that the only solutions are $e_{12}=e_{24}$. Note, on the interval $\sin x, \sin (x+\pi/3), \sin (x+\pi/6), -\cos\left(x\right)\cos\left(x+\frac{\pi}{3}\right)+\cos\left(x+\frac{\pi}{6}\right),$ and $-\cos\left(x\right)\cos\left(x+\frac{\pi}{6}\right)-\cos\left(x+\frac{\pi}{3}\right)$ are nonnegative. We solve $e_{12}=e_{24}$.
{\small \begin{align*}
e_{12}=\sqrt{\frac{1-\frac{\left(-\cos\left(x\right)\cos\left(x+\frac{\pi}{3}\right)+\cos\left(x+\frac{\pi}{6}\right)\right)^{2}}{\sin^{2}\left(x\right)\sin^{2}\left(x+\frac{\pi}{3}\right)}}{1-\frac{\left(\cos\left(x\right)+\cos\left(x+\frac{\pi}{6}\right)\cos\left(x+\frac{\pi}{3}\right)\right)^{2}}{\sin^{2}\left(x+\frac{\pi}{6}\right)\sin^{2}\left(x+\frac{\pi}{3}\right)}}} 
&=\sqrt{\frac{1-\frac{\left(-\cos\left(x\right)\cos\left(x+\frac{\pi}{6}\right)-\cos\left(x+\frac{\pi}{3}\right)\right)^{2}}{\sin^{2}\left(x\right)\sin^{2}\left(x+\frac{\pi}{6}\right)}}{1-\frac{\left(\cos\left(x\right)+\cos\left(x+\frac{\pi}{6}\right)\cos\left(x+\frac{\pi}{3}\right)\right)^{2}}{\sin^{2}\left(x+\frac{\pi}{6}\right)\sin^{2}\left(x+\frac{\pi}{3}\right)}}}=e_{24} \\
\frac{\left(-\cos\left(x\right)\cos\left(x+\frac{\pi}{3}\right)+\cos\left(x+\frac{\pi}{6}\right)\right)^{2}}{\sin^{2}\left(x\right)\sin^{2}\left(x+\frac{\pi}{3}\right)} & =\frac{\left(-\cos\left(x\right)\cos\left(x+\frac{\pi}{6}\right)-\cos\left(x+\frac{\pi}{3}\right)\right)^{2}}{\sin^{2}\left(x\right)\sin^{2}\left(x+\frac{\pi}{6}\right)}\\
\sin\left(x+\frac{\pi}{6}\right)\left(-\cos\left(x\right)\cos\left(x+\frac{\pi}{3}\right)+\cos\left(x+\frac{\pi}{6}\right)\right) & = \sin\left(x+\frac{\pi}{3}\right)\left(-\cos\left(x\right)\cos\left(x+\frac{\pi}{6}\right)-\cos\left(x+\frac{\pi}{3}\right)\right) \\
\cos x\sin \left(\left(x+\frac{\pi}{6}\right)-\left(x+\frac{\pi}{3}\right)\right) & =\frac{1}{2}\sin \left(2\left(x+\frac{\pi}{6}\right)\right)+\frac{1}{2}\sin \left(2\left(x+\frac{\pi}{3}\right)\right)\\
% \cos x\sin \left(-\frac{\pi}{6}\right) & = \frac{1}{2}\left(\sin \left(2x+\frac{\pi}{3}\right) - \sin \left(2x-\frac{\pi}{3}\right)\right) \\
-\frac{1}{2}\cos x & = \frac{\sqrt{3}}{2}\cos (2x) \\
\cos x & = \frac{-1\pm 5}{4\sqrt{3}}.
\end{align*}}
By checking the range cosine in the interval, it must be $\cos x={1}/{\sqrt{3}}$. Since $\cos \left(2x\right) =-1/3$, $x=\arccos\left(1/\sqrt{3}\right)\not\in \Q\pi$ by Niven's Theorem, so $T_x\not\in \msc{F}_2$ need not be considered. 

\end{appendices}
\bibliographystyle{alpha}
\bibliography{ref}

\begin{thebibliography}{KKPR20}

\bibitem[Che21]{tetrapy}
Abdellatif~Anas Chentouf.
\newblock Symbolic tetra.py.
\newblock \url{https://github.com/anaschen29/AC-18.Tetra}, 2021.

\bibitem[Deb80]{MR604258}
Hans~E. Debrunner.
\newblock \"{U}ber {Z}erlegungsgleichheit von {P}flasterpolyedern mit {W}\"{u}rfeln.
\newblock {\em Arch. Math. (Basel)}, 35(6):583--587 (1981), 1980.

\bibitem[Deh01]{dehn1901ueber}
Max Dehn.
\newblock Ueber den rauminhalt.
\newblock {\em Mathematische Annalen}, 55(3):465--478, 1901.

\bibitem[Gol74]{goldberg1974three}
Michael Goldberg.
\newblock Three infinite families of tetrahedral space-fillers.
\newblock {\em Journal of Combinatorial Theory, Series A}, 16(3):348--354, 1974.

\bibitem[Hil96]{MR1576480}
M.~J.~M. Hill.
\newblock Determination of the {V}olumes of certain {S}pecies of {T}etrahedra without employment of the {M}ethod of {L}imits.
\newblock {\em Proc. Lond. Math. Soc.}, 27:39--53, 1895/96.

\bibitem[KKPR20]{kedlaya2020space}
Kiran~S Kedlaya, Alexander Kolpakov, Bjorn Poonen, and Michael Rubinstein.
\newblock Space vectors forming rational angles.
\newblock {\em arXiv preprint arXiv:2011.14232}, 2020.

\bibitem[Sen81]{doi:10.1080/0025570X.1981.11976933}
Marjorie Senechal.
\newblock Which tetrahedra fill space?
\newblock {\em Mathematics Magazine}, 54(5):227--243, 1981.

\bibitem[Som22]{sommerville1922space}
DMY Sommerville.
\newblock Space-filling tetrahedra in euclidean space.
\newblock {\em Proceedings of the Edinburgh Mathematical Society}, 41:49--57, 1922.

\bibitem[Sun21]{criteriapy}
Yihang Sun.
\newblock criteria.py.
\newblock \url{https://github.com/KimiSun18/TetrahedraUROP}, 2021.

\bibitem[Syd65]{MR0192407}
J.-P. Sydler.
\newblock Conditions n\'{e}cessaires et suffisantes pour l'\'{e}quivalence des poly\`edres de l'espace euclidien \`a trois dimensions.
\newblock {\em Comment. Math. Helv.}, 40:43--80, 1965.

\bibitem[WD14]{wirth2014relations}
Karl Wirth and Andr{\'e}~S Dreiding.
\newblock Relations between edge lengths, dihedral and solid angles in tetrahedra.
\newblock {\em Journal of Mathematical Chemistry}, 52(6):1624--1638, 2014.

\end{thebibliography}

\end{document}